\documentclass[11pt,reqno]{amsart}
\usepackage{amsmath,amsthm,amsfonts,amssymb,mathrsfs,stmaryrd,color}
\usepackage[all]{xy}
\usepackage{url}
\usepackage{geometry}
\usepackage{amsmath}
\usepackage{amsthm}
\usepackage{amsfonts}
\usepackage{amssymb}
\usepackage{tikz-cd}
\usepackage[numbers]{natbib}
\usepackage{mathrsfs}
\usepackage{enumerate}
\usepackage[utf8]{inputenc}
\usepackage[T1]{fontenc}
\usepackage{hyperref}
\usepackage[capitalize]{cleveref}
\usepackage{enumitem}
\setlist[enumerate]{itemsep=2pt,parsep=2pt,before={\parskip=2pt}}
\let\amsamp=&
\makeatletter
\newcommand{\colim@}[2]{%
  \vtop{\m@th\ialign{##\cr
    \hfil$#1\operator@font colim$\hfil\cr
    \noalign{\nointerlineskip\kern1.5\ex@}#2\cr
    \noalign{\nointerlineskip\kern-\ex@}\cr}}%
}
\newtheorem{theorem}{Theorem}[section]
\newtheorem*{theorem*}{Theorem}
\newtheorem*{definition*}{Definition}
\newtheorem{proposition}[theorem]{Proposition}
\newtheorem{lemma}[theorem]{Lemma}

\theoremstyle{definition}
\newtheorem{definition}[theorem]{Definition}
\newtheorem{question}[theorem]{Question}
\newtheorem{remark}[theorem]{Remark}

\newcommand{\otimesl}{\otimes^{\boldsymbol{\mathrm{L}}}}
\newcommand{\bb}[1]{\mathbb{#1}}

\newcommand{\ideal}[1]{\mathfrak{#1}}
\renewcommand{\to}{\rightarrow}

\newcommand{\cal}[1]{\mathcal{#1}}

\let\lim\relax
\DeclareMathOperator*{\lim}{lim}

\DeclareMathOperator{\Tor}{Tor}

\DeclareMathOperator{\spec}{Spec}

\author{Ivan Zelich}
\address{Department of Mathematics\\Columbia University\\
New York, NY 10027}
\email{ivan.zelich@columbia.edu}
\begin{document}
\title{Affineness of maximal \'{E}tale locus}
\begin{abstract}
In this paper we will prove a strong version of the celebrated purity of the ramification locus theorem in algebraic geometry. 
Our key input is a Tor-independence result for global sections of \'{e}tale schemes over excellent regular local rings, 
which we will prove by tilting to perfect rings.
\end{abstract}
\maketitle
\section{Introduction}
The Zariski-Nagata purity theorem is a fundamental result in algebraic geometry. The following version is proved in \cite[0EA4]{sp} where it is
attributed to Ofer Gabber.
\begin{theorem}[Purity of the ramification locus]\label{thm:nagata-purity}
Let $X \to Y$ be a morphism of locally of finite-type between locally Noetherian schemes. Let $x \in X$ be a point and let $y = f(x)$. 
Assume that $\cal{O}_{X,x}$ is normal, $\cal{O}_{Y,y}$ is regular, and $\dim \cal{O}_{X,x} = \dim \cal{O}_{Y,y}\ge 1$. 
If for every point specialization $x' \leadsto x$ such that $\dim \cal{O}_{X,x'} = 1$, the morphism $f$ is unramified at $x'$, 
then $f$ is \'{e}tale at $x$. \cite[0EA4]{sp}
\end{theorem}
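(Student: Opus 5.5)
We prove \cref{thm:nagata-purity} by reducing to a local statement about finite covers of a regular local ring and then invoking the Auslander--Buchsbaum formula. First, \'{e}taleness at $x$ can be checked after base change along $\spec \cal{O}_{Y,y} \to Y$. So we may assume $Y = \spec A$ with $A$ regular local of dimension $d \ge 1$ and $y$ the closed point. Since $\dim \cal{O}_{X,x} = \dim A$ and $X$ is locally of finite type, $x$ is quasi-finite over $y$: the dimension formula for finite type algebras over the universally catenary ring $A$ forces $\mathrm{trdeg}\,\kappa(x)/\kappa(y) = 0$, and $x$ is isolated in its fibre.

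By Zariski's Main Theorem, after shrinking $X$ around $x$ we obtain an open immersion $X \hookrightarrow \spec B$ with $B$ finite over $A$. Replacing $A$ by its strict henselization (which preserves regularity, normality of the local rings, the dimension equality and unramifiedness), we may further assume that $\cal{O}_{X,x} = B$ is a finite local $A$-algebra which is normal of dimension $d$. The hypothesis now says that $B_{\ideal{q}}$ is unramified over $A$ for all height-one primes $\ideal{q}$ of $B$.

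The key step is to show that $B$ is free over $A$. If $d \le 2$ this is immediate: $B$ is normal, hence $S_2$, so $\mathrm{depth}\, B = d$ as an $A$-module. Auslander--Buchsbaum over the regular ring $A$ then gives $\mathrm{pd}_A B = 0$.

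For general $d$, I would argue by induction using the trace form. Consider the discriminant ideal $\mathfrak{d} \subseteq A$ of the trace pairing on $B$. Over height-one primes $\ideal{p}$ of $A$, the ring $B_{\ideal{p}}$ is a product of localizations at height-one primes of $B$. (Here one uses that $B$ is finite and torsion-free over $A$, being a normal domain dominating $A$.) So $B_{\ideal{p}}$ is finite \'{e}tale over the DVR $A_{\ideal{p}}$, and the discriminant is a unit there. Hence $\mathfrak{d}$ has height $\ge 2$.

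Since $B$ is reflexive over $A$ (normal implies $S_2$, and $A$ is normal), the trace map $B \to \mathrm{Hom}_A(B,A)$ is an isomorphism in codimension one between reflexive modules. It is therefore an isomorphism, so $B$ is self-dual. Now puncture: on $U = \spec A \setminus \{\ideal{m}\}$, induction on $d$ (applied to the local rings at non-maximal primes) shows that $B$ is finite \'{e}tale, hence locally free, over $U$. Then we use Zariski--Nagata in its classical form, namely that finite \'{e}tale covers of the punctured spectrum of a regular local ring of dimension $\ge 2$ extend. This follows from purity of the branch locus via SGA~2 / Grothendieck's purity for regular local rings. Combined with $B = \Gamma(U, \widetilde{B})$ by the $S_2$ property, it shows that $B$ is finite \'{e}tale over $A$.

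I expect the main obstacle to be exactly this last input: purity for the punctured spectrum when $d \ge 3$. Depth arguments alone fail there, since normality only gives depth $2$. The classical proof of that step reduces to $d = 2$ (handled above via Auslander--Buchsbaum) by cutting with a general regular parameter and using Lefschetz-type results for the \'{e}tale fundamental group (SGA~2, Exp.~X). This is precisely the place where the present paper instead proposes a Tor-independence argument for global sections of \'{e}tale schemes, obtained by tilting to perfect rings.
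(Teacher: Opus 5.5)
The paper does not prove this statement. It quotes it from \cite[0EA4]{sp} and uses it as an input, in Lemma~\ref{lem:cohpuretop}(ii) and in the proof of Theorem~\ref{thm:affine-mixed}. So your closing remark misreads the paper. Its Tor-independence argument proves the affineness theorem \emph{assuming} purity, and it could not replace the SGA~2 input in a proof of purity without circularity.

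Your treatment of the finite case is the classical argument and is acceptable as a citation-based proof. That is: Zariski's Main Theorem, induction on $d$, Auslander--Buchsbaum for $d\le 2$, and for $d\ge 3$ Zariski--Nagata purity for regular local rings plus $B=\Gamma(U,\widetilde{B})$. The trace-form/self-duality paragraph is never actually used.

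The genuine gap is your first reduction, the claim that $f$ is quasi-finite at $x$. This does not follow from $\dim\mathcal{O}_{X,x}=\dim\mathcal{O}_{Y,y}$ and finite type. Blow up $\mathbb{A}^2_k$ at the origin and take $x$ a $k$-point of the exceptional curve. The dimensions agree and $\kappa(x)=\kappa(y)$, but $x$ is not isolated in its fibre. Even $\mathrm{trdeg}\,\kappa(x)/\kappa(y)=0$ needs the hypothesis, via quasi-finiteness at the generic point; consider the generic point of the special fibre of $\mathbb{A}^1$ over a DVR.

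So the codimension-one hypothesis must be used to exclude a positive-dimensional component $C$ of the fibre through $x$.
\begin{itemize}
\item For $d\le 2$ this is easy. The generic point of $C$ then has height $1$, so $f$ is unramified there, hence quasi-finite there, which is absurd.
\item For $d\ge 3$ the generic point $x''$ of $C$ may have height $\ge 2$. The hypothesis says nothing about $x''$. Since $\dim\mathcal{O}_{X,x''}<\dim\mathcal{O}_{Y,y}$, you also cannot apply the (quasi-finite) statement at $x''$ inductively.
\end{itemize}

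Dimension counting cannot close this gap. A small resolution of $\spec k[a,b,c,d]/(ad-bc)$, viewed over $k[a,d,b-c]$, is generically finite over a regular base, yet its non-quasi-finite locus has codimension $2$. What has to be shown is that the \emph{ramification} locus acquires a divisorial component through $x$ whenever $f$ is not quasi-finite at $x$. This is exactly what the locally-finite-type version adds to the classical quasi-finite Zariski--Nagata theorem; the introduction notes it was conjectured in EGA~IV, 21.12.14. Your proposal does not supply this step, so as written it proves only the quasi-finite case.
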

A stronger form of this purity statement has been proved in the equicharacteristic case. The statement is as follows:
\begin{theorem}[Affineness of the complement of the ramification locus, equicharacteristic case]\label{thm:affine-equi}
Let $X \to Y$ be a morphism of finite-type between locally Noetherian schemes, where $Y$ is excellent and regular containing a field and 
$X$ is normal. Let $V \subset X$ be the maximal open subset such that $f|_V: V \to Y$ is \'{e}tale. 
Then the inclusion $V \to X$ is an affine morphism. \cite[0ECD]{sp}.
\end{theorem}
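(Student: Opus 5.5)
The plan follows the Stacks Project argument for the equicharacteristic case. First reduce to a local statement. Affineness of $V \to X$ is local on $X$, so we may take $X = \spec A$ affine with $A$ normal, and we must show that $V$ is affine. By the standard criterion (complement of an open in a Noetherian normal scheme), it suffices to check this after localizing: for every $x \in X \setminus V$ we need $V \times_X \spec \cal{O}_{X,x}$ to be affine. Equivalently, we need the complement $X\setminus V$ to be, locally, the support of a Cartier divisor, or at least to have pure codimension one in a sense that forces affineness. Further, by excellence of $Y$ and since completion is faithfully flat and commutes with formation of the étale locus, we may base change along $\cal{O}_{Y,y} \to \cal{O}_{Y,y}^{\wedge}$. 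We may then assume $Y = \spec R$ with $R$ a complete regular local ring containing a field, so $R \cong k[[t_1,\dots,t_d]]$ by Cohen structure.

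Next, handle codimension. Where $\dim \cal{O}_{X,x} \neq \dim \cal{O}_{Y,y}$ the map is not étale at $x$ near generic points in a way detected by the relative dimension. So after shrinking $X$ we may assume $X \to Y$ is quasi-finite at the points of interest, and, by Zariski's main theorem, that $X$ is an open in a scheme finite over $Y$. Theorem~\ref{thm:nagata-purity} shows that $X\setminus V$ has no components of codimension $\ge 2$ which are not forced by codimension-one ramification. However, purity alone does not give affineness: there can be points $x$ of codimension $\ge 3$ over which the punctured spectrum fails to be étale only along a codimension-$2$ locus. This is the genuine content of the theorem.

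The key step is to show that for $x$ a closed point with $V_x := V\times_X \spec\cal{O}_{X,x}$, the ring $\Gamma(V_x,\cal{O})$ is finite étale over the corresponding localization of $R$ and that $V_x$ coincides with its spectrum minus a divisor. The intended mechanism is as follows. Let $U \subset \spec R$ be the punctured-type open over which things are étale. The finite étale cover of $U$ given by $V$ extends, via normalization, to a finite $R$-algebra $B$ with $\Gamma(V,\cal{O}) = B$ (using $S_2$-ness, since $B$ is normal). We then show that $B$ is flat over $R$ on a big enough open set, using a Tor-vanishing for global sections of étale schemes over regular local rings. In equal characteristic $p$ this follows from Kunz/Frobenius arguments. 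In characteristic $0$ we would reduce to $p$ by approximation, or use Hochster's direct summand theorem. Once $B$ is shown to be flat and hence étale away from a divisor cut out by a discriminant, $V$ equals $\spec B$ minus the vanishing locus of the discriminant, and is therefore affine.

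The main obstacle is this Tor-independence/flatness step: proving that global sections over the étale open behave like a flat (hence free) module over the regular base. I would first try it in characteristic $p$ via Frobenius, since $F_*R$ is flat and $F$ acts on étale algebras by base change. This gives $\Tor_i^R(B, -)$ stable under Frobenius pullback, and hence zero by length growth.
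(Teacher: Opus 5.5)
Your proposal has a genuine gap at its core. You try to prove flatness of the wrong object, and what you conclude from flatness is false.

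\textbf{The finite algebra $B$ is the wrong object.} After the reductions, $V$ is only \'{e}tale, separated and quasi-compact over $\spec R$. It is not a finite \'{e}tale cover of an open $U\subset\spec R$, and $\Gamma(V,\cal{O}_V)$ is not a finite $R$-algebra. Let $B$ be the normalization of $R$ in $V$, so that $V\subset\spec B$ is open. For a normal domain, $\Gamma(V,\cal{O}_V)=B$ holds iff $\spec B\setminus V$ has codimension $\ge 2$. In that case $B$ is \'{e}tale by Theorem~\ref{thm:nagata-purity}, which is the trivial case.

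\textbf{$V$ is not $\spec B$ minus the discriminant.} In general $\spec B\setminus V$ is the ramification locus, and $V$ also contains unramified primes of $B$ lying over the branch divisor. For example, a height-one $\ideal{p}\subset R$ with $\ideal{p}B_{\ideal{p}}=P_1^2P_2$ gives $P_2\in V$. So $\spec B\setminus V$ is the support of a Weil divisor on a merely normal scheme, and it need not be $\bb{Q}$-Cartier. That is what makes the theorem non-trivial, not ramification in codimension $2$, which purity rules out at quasi-finite points.

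\textbf{Flatness does not help.} $B$ is $S_2$, so by Auslander--Buchsbaum it is automatically free over $R$ in codimension $\le 2$. In codimension $\ge 3$, where the theorem has content, $B$ need not be Cohen--Macaulay.

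\textbf{Other steps.}
\begin{enumerate}
\item Your Frobenius argument does not apply. $F^*$-stability holds for $\Gamma(V,\cal{O}_V)$, not for the ramified $B$. Moreover, $\Tor_i^R(\Gamma(V,\cal{O}_V),k)$ need not have finite length, so a length-growth argument has nothing to act on.
\item Your characteristic-$0$ step (``approximation or direct summand'') gives no mechanism.
\item You cannot shrink $X$ to the quasi-finite locus. Affineness of $V\to X$ must be checked at every $x\in X$, including points on positive-dimensional fibres.
\end{enumerate}

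What is missing is the cohomological argument used in the paper.
\begin{enumerate}
\item Reduce to $Y=\spec A$ local of dimension $d$ and induct on $d$. This lets you assume $X$ is affine, $V$ misses the closed fibre, and $V\to\spec A\setminus\{\ideal{m}_A\}$ is affine.
\item Then $H^i(\cal{O}_V)$ is $\ideal{m}_A$-torsion for $i\ge 1$. Purity together with Hartshorne--Lichtenbaum vanishing, applied at every $x$ whether quasi-finite or not, gives $H^{d-1}(\cal{O}_V)=H^d(\cal{O}_V)=0$.
\item Since $V$ misses the closed point, $R\Gamma(\cal{O}_V)\otimesl_A k\simeq 0$.
\item The key lemma is Tor-independence of the non-finite ring $H^0(\cal{O}_V)$ against $k$, not flatness of a finite algebra. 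In characteristic $p$, base change along the faithfully flat map $A\to A_{\mathrm{perf}}$ turns $H^0(\cal{O}_V)$ into a perfect ring. Lemma~\ref{lem:perftorind} then shows that $H^0(\cal{O}_V)\otimesl_A k$ is discrete.
\item Let $j\in[d-2]$ be minimal with $H^j(\cal{O}_V)\neq 0$. Then $\Tor^A_d(H^j(\cal{O}_V),k)$ is its non-zero socle.
\item In the spectral sequence $\Tor^A_{-i}(H^j(\cal{O}_V),k)\Rightarrow H^{i+j}(R\Gamma(\cal{O}_V)\otimesl_A k)$, nothing maps into $E^{-d,j}$, since $A$ has global dimension $d$. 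The only possible differential out of it lands in a subquotient of $\Tor^A_{d-j-1}(H^0(\cal{O}_V),k)=0$; this is where $j\le d-2$ is used.
\item So the term survives, contradicting $R\Gamma(\cal{O}_V)\otimesl_A k\simeq 0$. Hence $H^i(\cal{O}_V)=0$ for $i\ge 1$, and the quasi-affine $V$ is affine.
\end{enumerate}
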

It is an open question whether the same statement holds in mixed characteristic, and we will prove that this is indeed the case. 
Our key input is the following Tor-independence result for global sections of \'{e}tale schemes over excellent regular local rings (see Theorem~\ref{Theorem:torindepmixed} (iii)).
\begin{theorem}\label{thm:intro1}
Let $(A,\ideal{m}_A,k)$ be an excellent regular local ring with $p \in \ideal{m}_A$, and let $V \to \spec{A}$ be an \'{e}tale, quasi-compact and separated morphism. Suppose that
$H^i(\cal{O}_V)$ is an $A$-module supported on the maximal ideal $\{\ideal{m}_A\}$ for $i \ge 1$. Then $H^0(\cal{O}_V) \otimesl_A k$ is a discrete ring.
\end{theorem}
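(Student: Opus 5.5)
The plan is to reduce to a statement about perfect $\mathbb{F}_p$-algebras, where Tor-independence is automatic. For perfect rings $R$ and perfect $R$-algebras $S,T$ one has $\Tor_i^R(S,T)=0$ for $i\ge 1$ (Bhatt--Scholze).

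\textbf{Step 1: reduce to the complete case.} Since $A$ is excellent, $A\to\hat A$ is faithfully flat with regular target. Base change preserves the hypotheses:
\begin{itemize}
\item $V\times_A\hat A\to\spec\hat A$ is still \'etale, quasi-compact and separated;
\item cohomology commutes with flat base change, so $H^i(\cal{O}_{\hat V})=H^i(\cal{O}_V)\otimes_A\hat A$ is still supported at the maximal ideal;
\item $H^0(\cal{O}_V)\otimesl_A k$ is unchanged, since $\hat A\otimes_A k=k$.
\end{itemize}
So we may assume $A$ is complete. By Cohen, $A$ is then a quotient of $W(k)[[x_1,\dots,x_n]]$ by one element, or equal to it. After a standard unramified-regular reduction we may take regular parameters $p,x_2,\dots,x_d$, or $f,x_1,\ldots$ in the ramified case.

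\textbf{Step 2: pass to a perfectoid cover and tilt.} Construct a faithfully flat extension $A\to A_\infty$, with $A_\infty$ perfectoid, by adjoining compatible $p$-power roots of $p$ (or of a uniformizer) and of the regular parameters, followed by $p$-completion. Flatness follows from the regularity of $A$ and the flatness of the relevant Kummer-type towers over regular local rings. Set
\[
V_\infty=V\times_A A_\infty,\qquad B=H^0(\cal{O}_V),\qquad B_\infty=B\otimes_A A_\infty=H^0(\cal{O}_{V_\infty}).
\]
By faithful flatness it suffices to show $B_\infty\otimesl_{A_\infty}(A_\infty\otimes_A k)$ is discrete. Since $V$ is \'etale over a $p$-torsion-free ring, $p$ is a nonzerodivisor on $B_\infty$. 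Hence $B_\infty\otimesl_{A_\infty}(-)$ of an $A_\infty/p$-module can be computed over $R=A_\infty/p$ using $B_\infty/p$. Now $V_\infty\otimes\mathbb{F}_p$ is \'etale over the perfect ring $R$, hence a perfect scheme. Therefore $H^0(\cal{O}_{V_\infty}/p)$ is a perfect $R$-algebra, as is $k_\infty=R/\sqrt{\ideal{m}R}$. Bhatt--Scholze then gives that $H^0(\cal{O}_{V_\infty}/p)\otimesl_R k_\infty$ is discrete.

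\textbf{Step 3: the correction terms (main obstacle).} Two discrepancies must be controlled.
\begin{enumerate}
\item $B_\infty/p\to H^0(\cal{O}_{V_\infty}/p)$ is injective with cokernel $H^1(\cal{O}_{V_\infty})[p]$, and the higher cohomology of $\cal{O}_{V_\infty}/p$ is also involved.
\item $k_\infty$ differs from $A_\infty\otimes_A k=R/\ideal{m}R$.
\end{enumerate}
For (1), I would work with the full complex $R\Gamma(\cal{O}_{V_\infty})$ rather than $B_\infty$, using the triangle $B\to R\Gamma(\cal{O}_V)\to\tau^{\ge1}R\Gamma(\cal{O}_V)$. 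The third term has cohomology supported at $\ideal{m}$, i.e.\ it is $\ideal{m}$-power torsion and derived-complete-torsion. The $\Tor$'s of such modules against $k$ can be analysed via local cohomology $R\Gamma_{\ideal{m}}$, which itself is computed by tilting as above. The hope is to show that the contributions in homological degrees $\ge1$ cancel, or are almost zero, in the sense of almost mathematics relative to $\sqrt{p\,}$ and $\sqrt{\ideal{m}}$.

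For (2), write $R/\ideal{m}R$ as a finite iterated extension, or colimit, of quotients by $\sqrt{\ideal{m}}$-torsion pieces. Then apply the perfect Tor-vanishing with almost-zero error, and finally descend from ``almost discrete'' to ``discrete'' using that the original $\Tor_i^A(B,k)$ are honest $k$-vector spaces on which almost-zero implies zero after faithfully flat base change.

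I expect this bookkeeping in Step 3 to be the hard part. Concretely, it means showing that the $\ideal{m}$-supported cohomology together with the passage from $k$ to $k_\infty$ introduces no higher Tor. I would first try the case where $V$ has only $H^0$ and $H^1$ nonzero and induct on the cohomological amplitude.
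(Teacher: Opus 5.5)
There is a genuine gap, and it sits in Step 3. That step is not bookkeeping: it is the whole content of the statement, and the proposal gives no argument for it.

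\textbf{Why Step 3 fails as outlined.} After a flat perfectoid base change $A\to A_\infty$, each affine piece of $V_\infty$ has $p$-th roots modulo $\varpi$. To get such roots in $H^0(\mathcal{O}_{V_\infty})$ you must glue them, and the obstruction lies in the $p$-torsion of $H^1(\mathcal{O}_{V_\infty})=H^1(\mathcal{O}_V)\otimes_A A_\infty$.

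Consequently $H^0(\mathcal{O}_{V_\infty})/\varpi^{1/p^\infty}$ need not be perfect. Its Frobenius is injective, but surjectivity is exactly what can fail. So neither Bhatt--Scholze nor Proposition~\ref{prop:torindep} applies.

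Your almost-mathematics fallback does not rescue this. Since $H^1(\mathcal{O}_V)$ is $\mathfrak{m}$-power torsion and $p\in\mathfrak{m}$, it has nonzero $p$-torsion whenever it is nonzero. A flat base change of a nonzero $A$-module is in general not almost zero; for instance $p^{1/p^n}\neq 0$ in $A_\infty/\mathfrak{m}A_\infty$. This is precisely why the remark after Theorem~\ref{Theorem:torindepmixed} restricts the perfectoid-cover argument to the case $H^1(\mathcal{O}_V)=0$.

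Two smaller points. First, $A_\infty/p$ is not perfect, since Frobenius kills $\varpi^{1/p}$; the perfect quotient is $A_\infty/\varpi^{1/p^\infty}$. Second, your concern (2) is illusory. Replace $k$ by a residue field $k'$ of the cover. Then $H^0(\mathcal{O}_V)\otimesl_A k'\simeq (H^0(\mathcal{O}_V)\otimesl_A k)\otimes_k k'$, and $k\to k'$ is faithfully flat.

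\textbf{The missing idea.} Base change along the non-flat map $A\to A^+$ instead, and pay for the non-flatness with Bhatt's theorem (Theorem~\ref{thm:bhattcohen}).

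Over $A^+$ the gluing problem disappears. The scheme $V'=V\times_A A^+$ is a finite disjoint union of opens of $\spec A^+$. The global sections of each piece form a normal domain in which every element has a $p$-th root: take the root in the algebraically closed fraction field; it is integral, hence a section on every affine. So these rings are perfect modulo $p^{1/p^\infty}$, regardless of the higher cohomology.

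It remains to identify $H^0(\mathcal{O}_V)\otimesl_A A^+$ with $H^0(\mathcal{O}_{V'})$.
\begin{enumerate}
\item Bhatt's theorem says $R\Gamma_{\mathfrak{m}}(A^+)$ is concentrated in degree $d$, and $A$ has global dimension $d$. Hence for every $\mathfrak{m}$-supported $M$, the complex $M\otimesl_A A^+\simeq R\Gamma_{\mathfrak{m}}(A^+)\otimesl_A M$ is discrete.
\item Consider the spectral sequence for $R\Gamma(\mathcal{O}_V)\otimesl_A A^+\simeq R\Gamma(\mathcal{O}_{V'})$. This base change is valid because $V$ is flat over $A$, so it is Tor-independent. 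By step 1, the rows $j\ge 1$ are concentrated in column $0$.
\item Therefore the terms $\Tor_i^A(H^0(\mathcal{O}_V),A^+)$ for $i\ge 1$ survive to $E_\infty$ in negative total degree, and so must vanish.
\end{enumerate}
Thus the $\mathfrak{m}$-supported higher cohomology becomes invisible after base change to $A^+$, which is exactly what your Step 3 needed and could not supply. To finish, apply Proposition~\ref{prop:torindep} with $C$ the residue field $k_{A^+}$ of $A^+$, and use that $k\to k_{A^+}$ is faithfully flat.
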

This theorem rests on the recent result of Bhargav Bhatt where he proves the Cohen-Macauleyness of absolute integral closures \cite{bhattcohen}. 
As a corollary, we obtain (see Theorem~\ref{thm:cohpuregen}):
\begin{theorem}\label{thm:intro2}
Let $(A,\ideal{m}_A,k)$ be an excellent regular local ring of dimension $d$ and $V \to \spec{A}$ an \'{e}tale morphism, quasi-compact and separated morphism. Suppose that
$H^i(\cal{O}_V)$ is an $A$-module supported on the maximal ideal $\{\ideal{m}_A\}$ for $i \ge 1$, and $H^{d-1}(\cal{O}_V)=H^{d}(\cal{O}_V)=0$. 
Then $V$ is an affine scheme.
\end{theorem}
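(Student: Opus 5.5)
We prove Theorem~\ref{thm:intro2} by reducing to the case where the global sections form a finite étale-like algebra over $A$, and then showing $V = \spec H^0(\cal{O}_V)$. Set $B = H^0(\cal{O}_V)$. The first step is to handle the mixed characteristic case $p\in\ideal{m}_A$ with Theorem~\ref{thm:intro1}; it gives that $B\otimesl_A k$ is discrete, so $\Tor_i^A(B,k)=0$ for $i\ge 1$. In the equicharacteristic case (or when $p$ is invertible) we would instead use the analogous Tor-independence from the equicharacteristic theory underlying Theorem~\ref{thm:affine-equi}. Alternatively we reduce to it by completing and using the known affineness.

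Next, we compute $R\Gamma(V,\cal{O}_V)$ against the Koszul complex. Write $K$ for the Koszul complex on a regular system of parameters $x_1,\dots,x_d$, so $K\simeq k$. Since $V\to\spec A$ is quasi-compact and separated, $R\Gamma(V,\cal{O}_V)\otimesl_A k \simeq R\Gamma(V, \cal{O}_V\otimesl_A k)$. We take the fibre $V_k = V\times_A k$, which is étale over $k$ and quasi-compact, hence a finite disjoint union of spectra of finite separable extensions. This gives $R\Gamma(V,\cal{O}_V)\otimesl_A k \simeq \Gamma(V_k,\cal{O}_{V_k})$, concentrated in degree $0$.

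We then run a spectral sequence $E_2^{-i,j}=\Tor_i^A(H^j(\cal{O}_V),k)\Rightarrow H^{j-i}$, whose abutment vanishes outside degree $0$. The modules $H^j$ for $j\ge1$ are $\ideal{m}_A$-torsion and vanish for $j\ge d-1$. Since $\Tor^A_i(B,k)=0$ for $i>0$, the column $j=0$ contributes only in total degree $0$. Now take the largest $j\ge1$ with $H^j\neq 0$, so $j\le d-2$. For any nonzero $\ideal{m}$-torsion module $M$, $\Tor_d^A(M,k)\cong \mathrm{Hom}(k,M)\neq 0$ by Koszul self-duality. Hence $E_2^{-d,j}\neq 0$ sits in total degree $j-d\le -2$. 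We must check that it survives. Differentials into or out of it connect to rows $j\pm(r-1)$ with $i$ shifted by $r$, but $i$ cannot exceed $d$, and higher rows are zero. The row $0$ entries with $i\ge1$ vanish, and an incoming differential from row $j+r-1$ is zero. This forces a nonzero class in negative total degree, a contradiction. So $H^j(\cal{O}_V)=0$ for all $j\ge1$.

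Finally, with $R\Gamma(V,\cal{O}_V)=B$, Serre's criterion for quasi-compact separated schemes does not directly apply over a general base. Instead we argue that $V\to \spec B$ is an isomorphism. $B$ is flat over $A$ by the local criterion, since $\Tor_1^A(B,k)=0$ together with the depth/torsion-freeness bookkeeping above; this needs care because $B$ need not be finite. The fibre computation gives $B\otimes k=\Gamma(V_k)$, and $V\to\spec A$ is an isomorphism over the punctured spectrum on the $\spec B$ side via affineness of quasi-coherent cohomology. The main obstacle is this last step: showing $V\to \spec B$ is an open immersion that is also surjective. We would first try to prove it by induction on $d$, localizing at non-maximal primes, where the hypotheses persist because $H^i$ vanishes away from $\ideal{m}_A$. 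This leaves only the closed fibre, which is handled by the computation $B\otimes k=\Gamma(V_k)$.
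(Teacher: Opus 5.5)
\textbf{The abutment.} Your computation of the abutment is correct, and it simplifies the paper's route. Since $V\to\spec A$ is flat, quasi-compact and separated, $R\Gamma(\mathcal{O}_V)\otimesl_A k\simeq R\Gamma(V_k,\mathcal{O}_{V_k})$. This sits in degree $0$ because $V_k$ is a finite disjoint union of spectra of finite separable extensions of $k$. The offending term lives in total degree $j-d\le -2$, so you can skip the paper's reduction to the henselian case and its use of Lemma~\ref{lem:cohpuretop}(ii) (that $V$ misses the closed point).

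\textbf{First gap: the spectral sequence step fails as written.} The problem is that you take the \emph{largest} $j\ge1$ with $H^j(\mathcal{O}_V)\neq0$. The differentials leaving the corner are $d_r\colon E_r^{-d,j}\to E_r^{-d+r,\,j-r+1}$, a subquotient of $\Tor^A_{d-r}(H^{j-r+1}(\mathcal{O}_V),k)$. These targets lie in the \emph{lower} rows $1\le j-r+1\le j-1$, and maximality of $j$ says nothing about them. For example, $d_2$ maps into $\Tor^A_{d-2}(H^{j-1}(\mathcal{O}_V),k)$, which need not vanish, so the corner can be killed. Maximality only controls the incoming differentials, and those vanish anyway since $\Tor^A_{>d}=0$. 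You must take the \emph{smallest} such $j$, as the paper does. Then rows $1,\dots,j-1$ are zero, and the only remaining target, reached by $d_{j+1}$, is $\Tor^A_{d-j-1}(B,k)$. This vanishes by Tor-independence precisely because $j\le d-2$ gives $d-j-1\ge 1$. Switching to the other corner $E_2^{0,j}=H^j(\mathcal{O}_V)\otimes_A k$ for the largest $j$ does not rescue your version: $M\otimes_A k$ can vanish for nonzero $\mathfrak{m}$-torsion $M$, e.g.\ $M=H^d_{\mathfrak{m}}(A)$.

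\textbf{Second gap: the final step.} You leave ``$V\to\spec B$ is an isomorphism'' as the main obstacle and only sketch an approach. That sketch uses flatness of $B$ via the local criterion, which you yourself note is unavailable for non-finite $B$, plus an induction on $d$ that is never carried out. The missing observation is that $V$ is quasi-affine. Indeed, $V\to\spec A$ is \'etale, quasi-compact and separated, hence quasi-finite and separated. By Zariski's Main Theorem, $V$ is then a quasi-compact open in an affine scheme. So $V\to\spec B$ is a quasi-compact open immersion with image $\spec B\setminus V(I)$ for some finitely generated $I$.

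From here the conclusion is short. Since $B=\Gamma(V,\mathcal{O}_V)$ and $H^i(V,\mathcal{O}_V)=0$ for $i\ge1$, we get $R\Gamma_I(B)=0$. If some prime $\mathfrak{q}\supseteq I$ existed, tensoring with $\kappa(\mathfrak{q})$ would give $R\Gamma_I(\kappa(\mathfrak{q}))=\kappa(\mathfrak{q})=0$, which is absurd. Hence $V(I)=\emptyset$ and $V=\spec B$. Equivalently, $\mathcal{O}_V$ is ample on a quasi-affine scheme, so dimension shifting kills all higher quasi-coherent cohomology and Serre's criterion applies. This is what the paper means by ``thus $V$ is affine''; no flatness of $B$ and no induction are needed.

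\textbf{Equicharacteristic case.} Your treatment here is only gestured at. In characteristic $p$ the needed Tor-independence is Theorem~\ref{thm:cohpuregenp}, proved via $A_{\mathrm{perf}}$. In characteristic $0$ the paper defers to the argument in the Stacks Project.
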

Using standard arguments as in \cite[0ECD]{sp}, we can then finally prove our stronger version of the purity of the ramification locus theorem (see Theorem~\ref{thm:affine-mixed}).
\begin{theorem}[Affineness of the complement of the ramification locus]\label{thm:intro3}
Let $X \to Y$ be a morphism of finite-type between locally Noetherian schemes, where $Y$ is excellent and regular and 
$X$ is normal. Let $V \subset X$ be the maximal open subset such that $f|_V: V \to Y$ is \'{e}tale. 
Then the inclusion $V \to X$ is an affine morphism.
\end{theorem}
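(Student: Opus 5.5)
We follow the argument of \cite[0ECD]{sp}, using Theorem~\ref{thm:intro2} in place of its equicharacteristic input. Affineness of $j\colon V \to X$ is local on $X$, and it can be checked after flat base change along $\spec \cal{O}_{X,x} \to X$. So we may assume $X = \spec B$ is local, with closed point $x$, and we induct on $\dim B$. We first reduce to $Y$ local and regular. For the target we could pass to $\spec \cal{O}_{Y,y}$, or to its strict henselization or completion. Completion keeps excellence and regularity, and étaleness commutes with these flat base changes.

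Fix $x$ and $y = f(x)$. If $x \in V$ there is nothing to do, so assume $x \notin V$. By the induction hypothesis, applied to the localizations at non-closed points, $V \to X$ is affine over the punctured spectrum $U = X \setminus \{x\}$. If $\dim \cal{O}_{X,x} \neq \dim \cal{O}_{Y,y}$, or if the dimension is at most $1$, the standard dimension count of \cite[0ECD]{sp} applies. That argument shows the complement of $V$ is either empty or contains a divisor, using Theorem~\ref{thm:nagata-purity} together with normality of $X$. A complement of $V$ that is locally a Cartier divisor gives affineness directly; in general we use that an open whose complement has pure codimension $1$ in a normal local scheme is affine after suitable reductions.

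The main case is when $\dim \cal{O}_{X,x} = \dim \cal{O}_{Y,y} = d \geq 2$ and every codimension-one point of $X$ lies in $V$, so the complement of $V$ has codimension $\geq 2$. Then $\Gamma(V,\cal{O}_V) = B$ by normality. Here we want to show that $x$ must lie in $V$ after all, or else reach a contradiction with $x \notin V$. We consider $V$ as a scheme étale, quasi-compact and separated over $\spec A$, where $A = \widehat{\cal{O}}_{Y,y}$ (after base change). By induction, $V$ is affine over $U$, so $R^i j_*\cal{O}_V$ is supported at $x$ for $i \geq 1$. Hence $H^i(\cal{O}_V)$ is supported at $\ideal{m}_A$ for $i \geq 1$, using finiteness of $X \to Y$ near $x$ after replacing $X$ by a quasi-finite situation via Zariski's main theorem.

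The key remaining task is to verify $H^{d-1}(\cal{O}_V) = H^d(\cal{O}_V) = 0$; Theorem~\ref{thm:intro2} then makes $V$ affine. Then $V = \spec \Gamma(V,\cal{O}_V) = \spec B = X$, so $x \in V$, a contradiction. Alternatively, if we only apply Theorem~\ref{thm:intro1}, we obtain that $B \otimesl_A k$ is discrete, so $B$ is flat over $A$. Flatness together with unramifiedness in codimension one, via Theorem~\ref{thm:nagata-purity}, gives étaleness at $x$.

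The vanishing of $H^{d-1}$ and $H^d$ is the step I expect to be the obstacle. For $H^d$, we compare with local cohomology: $H^{i}(V,\cal{O}_V) \cong H^{i+1}_{Z}(\cal{O}_X)$ for $i \geq 1$, where $Z = X\setminus V$ has codimension $\geq 2$. Grothendieck vanishing then kills degrees above $\dim X$, giving $H^d(\cal{O}_V) = 0$. The case $H^{d-1}$ would need a depth argument, or else avoiding it by directly using the flatness route through Theorem~\ref{thm:intro1}, which I would try first.
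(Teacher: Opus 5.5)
Your plan puts the new input in the wrong case, and the case that actually carries the theorem is handled by an argument that does not work.

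Your ``main case'' is $\dim\mathcal{O}_{X,x}=\dim\mathcal{O}_{Y,y}=d\ge 2$, every codimension-one point of $\spec B$ in $V$, and $x\notin V$. This case is empty: Theorem~\ref{thm:nagata-purity} applied at $x$ says exactly that $f$ is \'{e}tale at $x$. No cohomology, flatness, or Theorem~\ref{thm:intro1}/\ref{thm:intro2} is needed there. In fact Theorem~\ref{thm:intro2} could not be applied when $Z=\spec B\setminus V=\{x\}$, since then $H^{d-1}(V,\mathcal{O}_V)=H^d_{\mathfrak{m}_B}(B)\neq 0$.

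The case with real content is $x\notin V$ with $Z$ containing codimension-one points, for instance the ramification divisor. You either omit it or cover it by the claim that an open subset whose complement has pure codimension $1$ in a normal local scheme is affine ``after suitable reductions''. That claim is false. In the local ring at the vertex of $xy=zw$, the complement of the non-$\mathbb{Q}$-Cartier plane $V(x,z)$ is not affine, since $H^2_{(x,z)}\neq 0$. Nothing makes $Z$ Cartier, and proving that $V$ is affine in exactly this situation is the content of the theorem. The argument of \cite[0ECD]{sp} is not a dimension count plus Cartier divisors; it is cohomological.

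The missing step is to run your cohomological argument in that case; in fact it works uniformly, with no case split.
\begin{enumerate}
\item Take $X=\spec B$ local of dimension $n\ge 2$, with $x\notin V\neq\emptyset$. By induction $V$ is affine over the punctured spectrum, so for $i\ge 1$ each element of $H^i(V,\mathcal{O}_V)$ is killed by a power of $\mathfrak{m}_B\supseteq\mathfrak{m}_AB$, hence is supported at $\mathfrak{m}_A$.
\item The dimension formula gives $n\le d=\dim A$. Also $H^i(V,\mathcal{O}_V)\cong H^{i+1}_Z(B)$ for $i\ge 1$.
\item Grothendieck vanishing kills $H^i(V,\mathcal{O}_V)$ for $i\ge n$. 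This settles $H^{d-1}$ and $H^d$ when $n<d$, and $H^d$ when $n=d$.
\item If $n=d$, Theorem~\ref{thm:nagata-purity} forces $\dim Z\ge 1$; this is where purity enters. Since $B$ is excellent and normal, $\widehat{B}$ is a domain. Hartshorne--Lichtenbaum vanishing then gives $H^d_Z(B)=0$, i.e.\ $H^{d-1}(V,\mathcal{O}_V)=0$. So $H^{d-1}$ is handled by Hartshorne--Lichtenbaum, not by a depth argument.
\item Theorem~\ref{thm:intro2} now makes $V$ affine.
\end{enumerate}

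This is the paper's proof, with one difference: the paper localizes on $Y$ rather than on $X$. It inducts on $\dim\mathcal{O}_{Y,y}$ and shrinks $X$ to an affine open on which $V$ misses the closed fibre. It then bounds the top nonvanishing degree of $H^*(V,\mathcal{O}_V)$ by $d-2$, computing $H^*_Z$ at each local ring of $X$ in the same way. Your localization on $X$ would work equally well once the case split is removed.
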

In \cite[21.12.14]{EGAIV}, the authors conjectured Theorem~\ref{thm:nagata-purity}, and further conjectured Theorem~\ref{thm:intro3}, because it would imply Theorem~\ref{thm:nagata-purity} 
by combining it with their earlier result \cite[21.12.8]{EGAIV}. 
Thus Theorem~\ref{thm:intro3} answers their conjecture in the affirmative.

\subsection*{Acknowledgements}I would like to thank my advisor Johan De Jong for his guidance and support throughout my PhD. I would also like to thank Hanlin Cai for
many helpful discussions and who very early on suggested that Bhatt's result on the Cohen-Macauleyness of absolute integral closures could be useful. 
Thank you also to Bhargav Bhatt for his helpful comments on an earlier draft of this paper.
\section{Preliminaries}
We will use this section as an opportunity to set-up some notation and give a new proof of our main result in characteristic $p$. We will
use cohomological indexing when working with objects in the derived category. 
For an integral scheme $X$ (resp. a domain $R$), we let $K(X)$ (resp. $K(R)$) denote its field of fractions, and for a natural number $n \in \bb{N}$ greater than $1$, we denote $[n]=\{1,2,\ldots,n\}$.
\begin{definition}\label{def:cohpure}
Let $(A,\ideal{m}_A,k)$ be a Noetherian local ring of dimension $d$. We say a separated $A$-scheme $V \to \spec{A}$ is \textit{cohomologically pure in codimension $1$} 
if $H^i(\cal{O}_V)$ is an $A$-module supported on $\{\ideal{m}_A\}$ for $i \ge 1$, 
$H^{d-1}(\cal{O}_V) = H^{d}(\cal{O}_V) = 0$ where $d = \dim{A}$, and $V \to \spec{A}$ is dominant.
\end{definition}
To justify this choice of terminology, we note the following result.
\begin{lemma}\label{lem:cohpuretop}
Let $(A,\ideal{m}_A,k)$ be a Noetherian local ring of dimension $d$, $V \to \spec{A}$ a morphism cohomologically pure in codimension $1$.
\begin{enumerate}
\item[(i)] If there is an open immersion $V \to \spec{A'}$ of $A$-schemes, where $A'$ is a Noetherian regular domain that is integral over $A$, 
then $\spec{A'} - V$ is of pure codimension $1$ in $\spec{A'}$. In particular, $V$ is affine. 
\item[(ii)] Suppose $V \to \spec{A}$ is \'{e}tale, and let 
$g: A \to B$ be the normalization of $A$ in $V$ giving a compactification 
$V \hookrightarrow \spec{B}$ of $V$ by Zariski's main theorem (\cite[02LR]{sp}). If $A$ is excellent and regular with $d \ge 3$, 
and the pair $(A, \ideal{m}_A)$ is henselian,
then $\spec{B} - V$ has dimension at least $2$. In particular, $V \to \spec{A}$ misses the maximal ideal.
\end{enumerate}
\end{lemma}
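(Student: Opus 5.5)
For (i), the plan is to compare the cohomology of $V$ with local cohomology on $\spec{A'}$. Set $Z = \spec{A'} - V$ with the reduced structure, and write $R = A'$. Since $V \to \spec{A}$ is dominant and $A'$ is a domain integral over $A$, the set $Z$ is a proper closed subset. First suppose $Z$ has an irreducible component $W$ of codimension $c \ge 2$, with generic point $\ideal{q}$. We localize at $\ideal{q}$: $R_{\ideal{q}}$ is a regular local ring of dimension $c$, and $V \times_{\spec R} \spec R_{\ideal{q}}$ is the punctured spectrum $U_{\ideal{q}}$ near $\ideal{q}$, away from the other components of $Z$. We then apply the local cohomology sequence. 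Because $R$ is integral over $A$, the map $\spec{A'} \to \spec{A}$ is affine, so $H^i(V,\cal{O}_V) = H^{i+1}_Z(R)$ for $i \ge 1$. Flat base change along $R \to R_{\ideal{q}}$ gives $H^i(\cal{O}_V) \otimes_R R_{\ideal{q}} \to H^i(U_{\ideal{q}},\cal{O})$, and this equals $H^{c}_{\ideal{q}}(R_{\ideal{q}}) \neq 0$ for $i = c-1 \ge 1$. That localization is a localization of the $A$-module $H^{c-1}(\cal{O}_V)$ at the prime $\ideal{p} = \ideal{q}\cap A$.

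Now split into cases. If $\ideal{p} \neq \ideal{m}_A$, this contradicts the support hypothesis. If $\ideal{p} = \ideal{m}_A$, then $\ideal{q}$ is maximal (integrality plus going-up), so $c = \dim R_{\ideal{q}}$. Here we need $c \in \{d-1, d\}$ in order to contradict $H^{d-1} = H^d = 0$. Integrality gives $c \le d$, but a lower bound is needed. I expect to handle this by noting that if $c \le d-2$, then $W$ is a closed point of $\spec{A'}$ of height $c$. Such a point lies over $\ideal{m}_A$, which has height $d$; this conflicts with going-down-type dimension control for integral extensions of a domain. I would justify it via the dimension formula, since $A'$ is regular and hence catenary and $A$ may be assumed universally catenary, or else reduce to $A$ normal.

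This dimension bookkeeping is the main obstacle in (i). Once it is done, $Z$ has pure codimension $1$. Then, since $R$ is regular and so locally factorial, $Z$ is locally principal, and $V = \spec{A'} - Z$ is affine: the complement of an effective Cartier divisor in an affine scheme is affine.

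For (ii), write $Z = \spec{B} - V$. The plan is to suppose $\dim Z \le 1$ and derive a contradiction. Since $(A,\ideal{m}_A)$ is henselian and $B$ is finite over $A$, $B$ is a product of local rings. The space $V$ is normal, so $B$ is normal, and $Z$ has codimension $\ge 2$ in $\spec{B}$ by construction; here I would use the normality of $B$ and $S_2$. I would use the purity of the ramification locus (Theorem~\ref{thm:nagata-purity}) to deduce that $B$ is étale over $A$ in codimension $\le 1$ on $\spec{A}$. Then $Z$ lies over a closed subset of codimension $\ge 2$.

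Next I would compute $H^i(\cal{O}_V) = H^{i+1}_Z(B)$. If $\dim Z \le 1$, the local cohomology $H^{j}_Z(B)$ is controlled by the depth of $B$ along $Z$. Grothendieck vanishing and nonvanishing then give $H^{d-1}_Z(B)$ or $H^{d}_Z(B) \neq 0$ whenever the components of $Z$ have codimension $d$ or $d-1$, which contradicts $H^{d-2} = H^{d-1} = 0$ for $V$. Making this rigorous will require $B$ to be Cohen–Macaulay in the relevant range, via étaleness over the regular ring $A$ away from $Z$ and the hypothesis $d \ge 3$.

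Finally, "misses the maximal ideal" follows because $\dim Z \ge 2$ forces a component of $Z$ through every maximal ideal of the local factors. I expect the depth and Cohen–Macaulay input to be the hardest step of (ii).
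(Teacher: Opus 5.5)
Your part (ii) draws its contradiction from a hypothesis that is not there. Cohomological purity gives $H^{d-1}(\mathcal{O}_V)=H^{d}(\mathcal{O}_V)=0$, together with the support condition for $i\ge 1$. It does not give $H^{d-2}(\mathcal{O}_V)=0$.

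Since $H^{j}_Z(B)\cong H^{j-1}(\mathcal{O}_V)$ for $j\ge 2$, the case $Z=\{\mathfrak{m}_B\}$ works. But when $\dim Z=1$, what you obtain is $H^{d-1}_Z(B)\cong H^{d-2}(\mathcal{O}_V)\neq 0$, and no vanishing hypothesis excludes this.

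The missing input is the support hypothesis:
\begin{itemize}
\item Because $d\ge 3$, the module $H^{d-2}(\mathcal{O}_V)$ is supported on $\{\mathfrak{m}_A\}$.
\item Yet it is nonzero after localizing at the generic point $\mathfrak{q}$ of a one-dimensional component of $Z$.
\item This $\mathfrak{q}$ is not maximal, so $\mathfrak{q}\cap A\neq\mathfrak{m}_A$ by integrality.
\end{itemize}
This is where $d\ge 3$ enters. No Cohen--Macaulay or depth input is needed, because Grothendieck non-vanishing $H^{c}_{\mathfrak{q}B_{\mathfrak{q}}}(B_{\mathfrak{q}})\neq 0$ for $c=\dim B_{\mathfrak{q}}$ holds for every Noetherian local ring.

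In fact your argument for (i) only needs regularity of $A'$ for the final affineness claim. So it applies to $B$ (a normal domain finite over the normal ring $A$, once $V$ is connected) and shows directly that $Z$ is pure of codimension $1$, hence of dimension $d-1\ge 2$. The paper instead splits into two cases. If $B$ is \'etale over $A$, then $B$ is regular and (i) applies. Otherwise, purity of the ramification locus places a pure codimension-$1$ set inside $Z$.

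Your claim that $Z$ has codimension $\ge 2$ in $\spec{B}$ ``by construction'' is also false in general. Take $V=D(f)\subset\spec{A}$ with $0\neq f\in\mathfrak{m}_A$: then $B=A$ and $Z=\spec{A}\setminus D(f)$ has codimension $1$. Under your hypothesis $\dim Z\le 1$ the codimension bound does hold, but only for dimension reasons.

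Part (i) follows the paper's argument, and you rightly flag a point the paper's proof passes over: a component of height $c\le d-1$ must not lie over $\mathfrak{m}_A$. This is not automatic for an arbitrary Noetherian local $A$ (Nagata's non-universally-catenary examples break it). It does follow from going down when $A$ is normal, as in every application.

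The same index slip appears in (i), though. The value $c=d-1$ would only give $H^{d-2}(\mathcal{O}_V)\neq 0$, which is not excluded. So you need every maximal ideal of $A'$ over $\mathfrak{m}_A$ to have height exactly $d$, not merely to rule out $c\le d-2$; going down gives this.

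Finally, no contradiction argument can settle (ii) when $Z=\emptyset$. Indeed $V=\spec{A}$ satisfies all the hypotheses, so the statement must allow components with $V_i=\spec{B_i}$. These components are affine, so this is harmless later, and the paper's proof overlooks it too. Relatedly, argue component by component: $\dim Z\ge 2$ for $Z=\coprod Z_i$ does not put a point of $Z$ in every local factor of $B$.
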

\begin{proof}
(i): Let $I \subset A'$ define the complement $\spec{A'} - V$. Let $\ideal{p} \subset A'$ be a component of $V(I)$ of height $c$. Then 
\[H_{V(I)}^c(A') \otimes_{A'} A'_{\ideal{p}} \simeq H_{\ideal{p}A'_{\ideal{p}}}^c(A'_{\ideal{p}}) \neq 0.\]
On the other hand, for $2 \le c \le d-1$, we have $H_{V(I)}^c(A') \simeq H^{c-1}(\cal{O}_V)$ is supported on $\{\ideal{m}_{A}\}$, so $H_{V(I)}^c(A') \otimes_{A'} A'_{\ideal{p}} = 0$,
and for $c=d$, we have $H_{V(I)}^d(A') \simeq H^{d-1}(\cal{O}_V) = 0$, so $H_{V(I)}^c(A') \otimes_{A'} A'_{\ideal{p}} = 0$ as well.
Therefore we conclude $c=1$, as desired.\\
(ii): Since $V \to \spec{A}$ is \'{e}tale and $A$ is regular, $V \simeq \prod_{i \in [n]} V_i, n \in \bb{N}$ where each $V_i$ is an integral scheme \'{e}tale over $A$. 
Hence we may assume $V$ is integral.
Then $B$ is a normal domain, and since $A$ is excellent, $B$ is module-finite over $A$. Since
$(A,\ideal{m}_A)$ is henselian, $B$ is a local ring.\\
\indent There are two cases, either $B$ is \'{e}tale over $A$ or it has ramification. In the first case, 
we may conclude by (i),
and in the second case, $V$ is contained in the maximal open subset of $\spec{B}$ where $\spec{B} \to \spec{A}$ is \'{e}tale, 
so the complement $\spec{B} - V$ contains the ramification locus of $g$, which is of pure codimension $1$ in $\spec{B}$ by purity of the ramification locus, 
so $\spec{B} - V$ has dimension at least $2$.
\end{proof}
Let us now illustrate how to prove the characteristic $p$ version of our main result in a different way to the one given in \cite[0ECD]{sp},
 whose ideas will in fact translate to mixed characteristic. We recall the following Tor-independence result for perfect rings.
 \begin{lemma}\label{lem:perftorind}
Let $B \leftarrow A \to C$ be a diagram of perfect rings. Then the natural map $B \otimesl_A C \to B \otimes_A C$ is an isomorphism.
\end{lemma}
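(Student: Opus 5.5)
The proof is a reduction to the case where $A$ is the perfect closure of a polynomial ring, using only that Tor commutes with filtered colimits and that Frobenius is bijective on perfect rings. All rings are of characteristic $p$: a perfect ring is an $\mathbb{F}_p$-algebra on which $\varphi(x)=x^p$ is bijective. The goal is to show $\Tor_i^A(B,C)=0$ for $i\ge 1$, which says exactly that the natural map $B\otimesl_A C\to B\otimes_A C$ is an isomorphism.

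\emph{Step 1: a resolution of $B$ by perfect algebras.} Let $P_0=\mathbb{F}_p[x_b : b\in B]_{\mathrm{perf}}\otimes_{\mathbb{F}_p} A$, i.e. $A[x_b^{1/p^\infty}]$, with the evident surjection $P_0\to B$. Iterating this construction, in the usual way for a free simplicial resolution of $B$ over $A$, gives a simplicial $A$-algebra $P_\bullet\to B$ with two properties: each $P_n$ is of the form $A\otimes_{\mathbb{F}_p}\mathbb{F}_p[X_n]_{\mathrm{perf}}$, and $P_\bullet\to B$ is a weak equivalence. The second property holds because the resolution is built by perfecting the standard polynomial resolution, and perfection $R\mapsto \operatorname{colim}(R\xrightarrow{\varphi}R\xrightarrow{\varphi}\cdots)$ is exact and is the identity on the perfect ring $B$.

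\emph{Step 2: computing the derived tensor product.} We have $B\otimesl_A C\simeq |P_\bullet\otimes_A C|$ once each $P_n$ is known to be flat over $A$. This flatness does hold: $\mathbb{F}_p[X]_{\mathrm{perf}}$ is a filtered colimit of polynomial rings, hence flat over $\mathbb{F}_p$, and base change to $A$ preserves flatness. Therefore $B\otimesl_A C$ is computed by the simplicial perfect ring $P_\bullet\otimes_A C$.

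\emph{Step 3: higher homotopy vanishes.} Each homotopy group $\pi_i(P_\bullet\otimes_A C)$ carries a Frobenius action induced by the levelwise Frobenius. Since every $P_n\otimes_A C$ is perfect, this action is bijective. On the other hand, for $i\ge 1$ the Frobenius on $\pi_i$ of any simplicial commutative $\mathbb{F}_p$-algebra is zero; this is the classical fact that the $p$-th power operation on higher homotopy vanishes. A map that is both bijective and zero forces $\pi_i=0$ for $i\ge1$, and hence $B\otimesl_A C\simeq \pi_0=B\otimes_A C$.

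The main obstacle is Step 3, specifically the vanishing of Frobenius on higher homotopy. I would cite it from Bhatt–Scholze (\emph{Projectivity of the Witt vector affine Grassmannian}, Lemma 11.6) or from Bhatt–Lurie rather than reprove it. If a more elementary route is wanted, one can argue directly:
\begin{enumerate}
\item Write $B=\operatorname{colim}(B\xrightarrow{\varphi}B\to\cdots)$.
\item Use that Tor commutes with filtered colimits to identify $\Tor_i^A(B,C)$ with the colimit of $\Tor_i^A(B,C)$ along the maps induced by Frobenius.
\item Show these induced maps on $\Tor_i$, $i\ge1$, are zero by comparing $\varphi$ with the relative Frobenius on a polynomial resolution.
\end{enumerate}
That comparison is again the same key fact, so either way the proof rests on it.
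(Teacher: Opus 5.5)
Your argument is correct. It is the standard proof behind the Bhatt--Scholze reference that the paper cites without giving a proof of its own: compute $B\otimesl_A C$ by a levelwise perfect simplicial ring $P_\bullet\otimes_A C$, so Frobenius is an automorphism of each $\pi_i$, while Frobenius acts by zero on $\pi_i$ for $i\ge 1$. Your perfected polynomial resolution is a clean way to make the bijectivity of Frobenius visible. If you want the key fact self-contained, write $\varphi=\mu\circ(\mathrm{id},\,x\mapsto x^{p-1})$ as pointed maps of simplicial sets; the levelwise product $\mu\colon R\times R\to R$ then induces the zero map on $\pi_i$ for $i\ge 1$, because $\mu_*$ is a group homomorphism there and $\mu$ vanishes on $R\times 0$ and on $0\times R$.
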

\begin{proof}
This has been observed many times. The earliest published reference we are aware of is \cite[Lemma 3.16 or Proposition 11.6]{BhattScholzeProj}.
\end{proof}
\begin{theorem}\label{thm:cohpuregenp}
Let $(A,\ideal{m}_A,k)$ be an excellent regular local ring containing a field of characteristic $p>0$ and $V \to \spec{A}$ a quasi-compact, \'{e}tale morphism.
Then $H^0(\cal{O}_V) \otimesl_A k$ is a discrete ring.
\end{theorem}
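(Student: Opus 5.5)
The plan is to reduce to Lemma~\ref{lem:perftorind} by passing to perfections, using that regularity of $A$ makes the perfection faithfully flat. Write $M = H^0(\cal{O}_V)$. The goal is $\Tor_i^A(M,k)=0$ for $i>0$.

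\textbf{Step 1: flat base change to $A_{\mathrm{perf}}$.} Let $A_{\mathrm{perf}} = \operatorname{colim}(A \xrightarrow{F} A \xrightarrow{F} \cdots)$. By Kunz's theorem Frobenius on the regular ring $A$ is flat, so $A \to A_{\mathrm{perf}}$ is faithfully flat. Hence it suffices to show $\Tor_i^A(M,k)\otimes_A A_{\mathrm{perf}}=0$ for $i>0$. This module equals $\Tor_i^{A_{\mathrm{perf}}}(M\otimes_A A_{\mathrm{perf}},\, A_{\mathrm{perf}}/\ideal{m}_AA_{\mathrm{perf}})$.

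Since $V \to \spec{A}$ is étale, relative Frobenius of $V$ over $A$ is an isomorphism. Therefore $V_{\mathrm{perf}} \simeq V\times_{\spec A}\spec A_{\mathrm{perf}}$. By flat base change for the quasi-compact (and, working over a separated base, quasi-separated) morphism $V \to \spec A$, we get $M\otimes_A A_{\mathrm{perf}} \simeq H^0(\cal{O}_{V_{\mathrm{perf}}}) =: M'$. The ring $M'$ is perfect, because Frobenius is an automorphism of the sheaf $\cal{O}_{V_{\mathrm{perf}}}$.

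\textbf{Step 2: apply Lemma~\ref{lem:perftorind}.} Take the diagram $M' \leftarrow A_{\mathrm{perf}} \to k_{\mathrm{perf}}$ of perfect rings. The lemma gives $\Tor_i^{A_{\mathrm{perf}}}(M',k_{\mathrm{perf}})=0$ for $i>0$. Here $k_{\mathrm{perf}} = A_{\mathrm{perf}}/(x_1^{1/p^\infty},\dots,x_d^{1/p^\infty})$, where $x_1,\dots,x_d$ is a regular system of parameters of $A$.

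\textbf{Step 3 (main obstacle): descend from $k_{\mathrm{perf}}$ to $A_{\mathrm{perf}}/(x_1,\dots,x_d)$.} The ring $A_{\mathrm{perf}}/\ideal{m}_AA_{\mathrm{perf}}$ is not perfect, so the lemma does not apply to it directly.

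The first thing I would try is the Frobenius twist. For a perfect $A_{\mathrm{perf}}$-algebra $M'$, the $n$-th inverse Frobenius is a ring automorphism of both $A_{\mathrm{perf}}$ and $M'$. It carries $(x_1,\dots,x_d)$ to $(x_1^{1/p^n},\dots,x_d^{1/p^n})$, so
\[\Tor_i^{A_{\mathrm{perf}}}\bigl(M',A_{\mathrm{perf}}/(x_1,\dots,x_d)\bigr)\simeq \Tor_i^{A_{\mathrm{perf}}}\bigl(M',A_{\mathrm{perf}}/(x_1^{1/p^n},\dots,x_d^{1/p^n})\bigr)\]
for every $n$, as abelian groups. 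Passing to the colimit over $n$ gives $\Tor_i(M',k_{\mathrm{perf}})=0$. The difficulty is that the transition maps (Koszul maps multiplying by $x_j^{1/p^n-1/p^{n+1}}$) are not isomorphisms, so vanishing of the colimit does not immediately force vanishing at each stage.

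To close this gap I would argue one parameter at a time by induction on $d$, using that $x_1,\dots,x_d$ is a regular sequence on $A_{\mathrm{perf}}$. Each step then reduces to a statement about $x$-torsion in a perfect ring $R$ (successively $R=M'/(x_1,\dots,x_{j-1})_{\mathrm{perf}}$-type quotients). Suppose $xm=0$. Then $x^{1/p^n}m^{1/p^n}=0$ for all $n$, and perfectness of $R$ (so reducedness) lets one compare $m^{1/p^n}$ across the tower. In this way vanishing of the $x^{1/p^\infty}$-torsion, which is what Step 2 supplies, should force vanishing of the $x$-torsion.

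Equivalently, one can apply Lemma~\ref{lem:perftorind} to the perfect rings $A_{\mathrm{perf}}/(x_1^{1/p^\infty},\dots,x_j^{1/p^\infty})$ one index at a time. If this bookkeeping goes through, then $\Tor_{>0}^A(M,k)\otimes_AA_{\mathrm{perf}}=0$, and faithful flatness from Step 1 finishes the proof.
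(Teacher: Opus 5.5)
Your Steps 1 and 2 are correct and match the paper's argument. Step 3, however, has a genuine gap, and the mechanism you sketch for it cannot work.

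That sketch uses only that $M'$ (or the successive quotient $R$) is a perfect $A_{\mathrm{perf}}$-algebra. For such algebras, the vanishing supplied by Step 2 carries no information about $\Tor$ against $A_{\mathrm{perf}}/\ideal{m}_AA_{\mathrm{perf}}$. Take already $d=1$, where $\Tor_1^{A_{\mathrm{perf}}}(M',A_{\mathrm{perf}}/x^{1/p^n})$ is the torsion $M'[x^{1/p^n}]$. Reducedness gives $M'[x^{1/p^n}]=M'[x]$. The transition maps in your colimit, multiplication by $x^{(p-1)/p^{n+1}}$, are identically zero on it: if $xm=0$ and $N\epsilon\ge 1$, then $(x^{\epsilon}m)^N=0$. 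So the colimit vanishes for every perfect $M'$.

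Here is a concrete counterexample to the implication you want. Let $A=\mathbb{F}_p[[x]]$ and $R=k_{\mathrm{perf}}=A_{\mathrm{perf}}/(x^{1/p^\infty})$. Lemma~\ref{lem:perftorind} gives $\Tor_{>0}^{A_{\mathrm{perf}}}(R,k_{\mathrm{perf}})=0$. Yet $x$ acts by $0$ on $R$, so $\Tor_1^{A_{\mathrm{perf}}}(R,A_{\mathrm{perf}}/x)=R\neq 0$. Thus no bookkeeping with the perfect quotients $A_{\mathrm{perf}}/(x_1^{1/p^\infty},\dots,x_j^{1/p^\infty})$ can take you from $k_{\mathrm{perf}}$ back to $A_{\mathrm{perf}}/\ideal{m}_AA_{\mathrm{perf}}$. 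Any correct argument must use that $M'=M\otimes_AA_{\mathrm{perf}}$ is base-changed from $A$.

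The fix, which is exactly the paper's proof, is never to pass to $A_{\mathrm{perf}}/\ideal{m}_AA_{\mathrm{perf}}$. Base change $M$ along $A\to A_{\mathrm{perf}}$, but enlarge $k$ to $k_{\mathrm{perf}}$ over the field $k$. Since $k\to k_{\mathrm{perf}}$ is a field extension, $M\otimesl_A k_{\mathrm{perf}}\simeq (M\otimesl_A k)\otimes_k k_{\mathrm{perf}}$. Hence $\Tor_i^A(M,k)=0$ as soon as $\Tor_i^A(M,k_{\mathrm{perf}})=0$. The map $A\to k\to k_{\mathrm{perf}}$ factors through $A_{\mathrm{perf}}$, and $A\to A_{\mathrm{perf}}$ is flat, so
\[M\otimesl_A k_{\mathrm{perf}}\simeq (M\otimes_A A_{\mathrm{perf}})\otimesl_{A_{\mathrm{perf}}}k_{\mathrm{perf}}= M'\otimesl_{A_{\mathrm{perf}}}k_{\mathrm{perf}},\]
which is discrete by your Step 2.

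Equivalently, your Step 1 module and your Step 2 module are both of the form $T\otimes_k W$, with $T=\Tor_i^A(M,k)$. Here $W$ is the nonzero $k$-vector space $A_{\mathrm{perf}}/\ideal{m}_AA_{\mathrm{perf}}$, respectively $k_{\mathrm{perf}}$, so the two modules vanish together.

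A minor point: $V$ is quasi-separated because it is Noetherian (quasi-compact and locally of finite type over $A$), not because the base is separated.
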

\begin{proof}
Let $A_{\text{perf}}$ be the perfection of $A$, and $A'=H^0(\cal{O}_V)$. Since $A$ is regular, $A \to A_{\text{perf}}$ is a faithfully flat map of local rings.
Since the map $k \to k_{\text{perf}}$ is faithfully flat, it suffices to show that $A' \otimesl_A k_{\text{perf}}$ is a discrete ring.
The map $A \to A_{\text{perf}}$ is faithfully flat, so 
\[A' \otimesl_A k_{\text{perf}} \simeq A' \otimesl_A A_{\text{perf}} \otimesl_{A_{\text{perf}}} k_{\text{perf}} \simeq A' \otimes_A A_{\text{perf}} \otimesl_{A_{\text{perf}}} k_{\text{perf}}.\]
Now by flat base-change, $A' \otimes_A A_{\text{perf}} \simeq H^0(\cal{O}_{V'})$ where $V' = V \times_{\spec{A}} \spec{A_{\text{perf}}}$.
Since $V' \to \spec{A_{\text{perf}}}$ is still \'{e}tale, the ring $H^0(\cal{O}_{V'}) = A' \otimes_A A_{\text{perf}}$ is a perfect ring, and hence by Lemma~\ref{lem:perftorind}, we have
\[A' \otimes_A A_{\text{perf}} \otimesl_{A_{\text{perf}}} k_{\text{perf}} \simeq A' \otimes_A A_{\text{perf}} \otimes_{A_{\text{perf}}} k_{\text{perf}} .\]
Hence $A' \otimesl_A k = H^0(\cal{O}_V) \otimesl_A k$ is a discrete ring as claimed.\\
\end{proof}
We then have:
\begin{theorem}\label{thm:cohpuregenequi}
Let $(A,\ideal{m}_A,k)$ be an excellent regular local ring containing a field and $V \to \spec{A}$ an \'{e}tale morphism that is cohomologically pure in codimension $1$. 
Then $V$ is an affine scheme.
\end{theorem}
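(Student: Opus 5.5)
We would try to deduce affineness from Lemma~\ref{lem:cohpuretop}(i), taking $A'=H^0(\cal{O}_V)$. The goal is to show two things: $\spec{A'}$ is regular (indeed ind-étale or étale over $A$), and the canonical map $j\colon V\to \spec{A'}$ is an open immersion. Once both hold, the vanishing $H^{d-1}(\cal{O}_V)=H^d(\cal{O}_V)=0$ and the support condition on $H^i(\cal{O}_V)$ feed directly into Lemma~\ref{lem:cohpuretop}(i). That lemma says the complement is of pure codimension $1$ and $V$ is affine.

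\textbf{Step 1: reductions.} We replace $A$ by its strict henselization, or its completion; both are excellent, regular and faithfully flat over $A$. Cohomology of $\cal{O}_V$ commutes with this flat base change, so $V$ stays cohomologically pure in codimension $1$. Affineness descends along fpqc covers. As in Lemma~\ref{lem:cohpuretop}(ii), we may also assume $V$ is integral. We then argue by induction on $d=\dim A$. The cases $d\le 2$ are handled directly: for $d\le1$ everything is affine, and for $d=2$ we combine purity of the branch locus with Lemma~\ref{lem:cohpuretop}(ii).

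\textbf{Step 2: the local situation away from $\ideal{m}_A$.} For a non-maximal prime $\ideal{q}$ of $A$, the cohomology groups $H^i(\cal{O}_V)_{\ideal{q}}=H^i(\cal{O}_{V_{\ideal{q}}})$ vanish for $i\ge1$. Hence $V_{\ideal{q}}$ satisfies the hypotheses over $A_{\ideal{q}}$ trivially, and by Serre's criterion $V_{\ideal{q}}$ is affine with $\Gamma(V_{\ideal{q}})=A'_{\ideal{q}}$. In particular $j$ is an isomorphism over $\spec{A}-\{\ideal{m}_A\}$, and $A'_{\ideal{q}}$ is étale over $A_{\ideal{q}}$, hence flat. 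It remains to control $A'$ at the closed point.

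\textbf{Step 3: flatness at $\ideal{m}_A$.} In characteristic $p$, Theorem~\ref{thm:cohpuregenp} gives $\Tor_i^A(A',k)=0$ for $i>0$. Here $A'\subset B$, where $B$ is the normalization of $A$ in $V$, which is module-finite since $A$ is excellent. We would then like to invoke the local flatness criterion to conclude that $A'$ is flat over $A$. For this we need $A'$ to be finite over $A$, or at least $\ideal{m}_A$-adically separated in the required sense. We would try to obtain this from the description $A'=\Gamma(U,\cal{O}_{\spec{B}})$, where $U\subset\spec{B}$ is the open set $V$. Because $H^0$ of an open subset of a normal scheme depends only on the codimension-$1$ part of the complement, $A'$ is the ring of sections of a normal scheme on the complement of a divisor. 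Granting flatness, $A'\otimes_A k$ is a ring that is generically (on $V$) étale, and flatness lets us transport regularity: $A'$ is then flat with geometrically regular fibres, hence regular, and $j$ is an open immersion by Step~2 together with the fact that $V$ misses the closed point (Lemma~\ref{lem:cohpuretop}(ii)). Lemma~\ref{lem:cohpuretop}(i) then finishes the argument.

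\textbf{Main obstacle.} We expect the hardest part to be Step~3, which has two pieces. The first is the finiteness or separatedness of $A'$ needed to turn Tor-vanishing into flatness. The second is characteristic $0$, where Theorem~\ref{thm:cohpuregenp} is unavailable. For characteristic $0$, our first attempt would be a direct argument: such an $A$ contains $\bb{Q}$, and after completion it is a power series ring over a field, so we would try to spread $V$ out and reduce to characteristic $p$ by standard approximation. Failing that, we would fall back on the argument of \cite[0ECD]{sp} in that case.
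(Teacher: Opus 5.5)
There is a genuine gap in Step 3, and the endgame via Lemma~\ref{lem:cohpuretop}(i) cannot work with $A'=H^0(\cal{O}_V)$.

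\textbf{Why Step 3 fails.} The inclusion you rely on for finiteness is backwards. $B$ is the integral closure of $A$ \emph{in} $A'$, so $B\subseteq A'$, and $A'$ is typically not finite over $A$ (think of sections over the complement of a divisor in $\spec{B}$). Worse, in your reduced situation ($A$ henselian, $d\ge 3$) $V$ misses the closed point. So if $V$ is affine, which is the conclusion sought, then $A'\otimes_A k=\Gamma(V\times_{\spec{A}}\spec{k},\cal{O})=0$. Since $A\to A'$ is injective, lying over then shows that $A'$ is \emph{never} integral over $A$ in the relevant case. This has three consequences:
\begin{enumerate}
\item Lemma~\ref{lem:cohpuretop}(i) needs $A'$ Noetherian, regular and integral over $A$, so it cannot be applied to this $A'$.
\item The local flatness criterion is not available in the form you invoke.
\item The claim that $A'\otimes_A k$ is ``generically \'{e}tale on $V$'' says nothing, because $V$ has no points over $\ideal{m}_A$.
\end{enumerate}
What actually has to be shown is that the closed fibre of $\spec{A'}$ is empty. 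Nothing in your plan turns $\Tor$-vanishing together with $H^{d-1}(\cal{O}_V)=0$ into that statement.

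\textbf{The missing idea.} The paper encodes ``$V$ misses the closed point'' as $R\Gamma(\cal{O}_V)\otimesl_A k\simeq 0$, and runs the spectral sequence $E_2^{-s,t}=\Tor_s^A(H^t(\cal{O}_V),k)$. Let $j\in[d-2]$ be minimal with $H^j(\cal{O}_V)\neq 0$. Since $H^j$ is $\ideal{m}_A$-power torsion, $\Tor_d^A(H^j(\cal{O}_V),k)\neq 0$. This term receives no differentials, because $\Tor$ vanishes above degree $d$. It emits none either: rows $1,\dots,j-1$ vanish, row $0$ is only $E_2^{0,0}$ by Theorem~\ref{thm:cohpuregenp}, and hitting $E^{0,0}$ would force $j=d-1$. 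So the term survives, contradicting convergence to $0$. Hence all higher cohomology vanishes, and the quasi-affine $V$ is affine.

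\textbf{A repair within your framework.} You can also finish without any flatness, finiteness or regularity of $A'$.
\begin{enumerate}
\item Rerun Step 2 with $V_f$ for $f\in\ideal{m}_A$; localizations at primes are not open, so they do not give an open-immersion statement. This shows that $j$ identifies $V$ with $\spec{A'}\setminus V(\ideal{m}_AA')$.
\item By $\Tor$-vanishing and d\'{e}vissage, $R\Gamma_{V(\ideal{m}_AA')}(A')\simeq A'\otimesl_A R\Gamma_{\ideal{m}_A}(A)\simeq (A'\otimes_A E)[-d]$, where $E=H^d_{\ideal{m}_A}(A)$.
\item Therefore $0=H^{d-1}(\cal{O}_V)\cong A'\otimes_A E$. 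This module contains $A'\otimes_A k$, the map being injective because $\Tor_1^A(A',E/k)=0$.
\item Hence $A'\otimes_A k=0$, so $V(\ideal{m}_AA')=\emptyset$ and $V=\spec{A'}$.
\end{enumerate}
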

\begin{proof}
The case $\mathrm{dim}(A) = 2$ is clear and we may therefore assume $d=\mathrm{dim}(A) \ge 3$. 
We may assume the pair $(A, \ideal{m}_A)$ is henselian since the henselianization of $A$ is ind-\'{e}tale over $A$. So by Lemma~\ref{lem:cohpuretop} (ii), we find that $V \to \spec{A}$ misses the maximal ideal of $A$. Hence by base-change, we see $R\Gamma(\cal{O}_V) \otimesl_A k \simeq 0$.
If $\mathrm{char}(k)=0$, we conclude as in \cite[0ECD]{sp}. Our novelty will be when $\mathrm{char}(k)=p>0$.\\
\indent By Theorem~\ref{thm:cohpuregenp}, we have $H^0(\cal{O}_V) \otimesl_A k$ is a discrete ring. Since $V \to \spec{A}$ is cohomologically pure in codimension $1$, 
each $H^i(\cal{O}_V)$ is supported on $\{\ideal{m}_A\}$ and thus has depth $0$ for $i\ge 1$. 
Hence if $H^i(\cal{O}_V) \neq 0$ for $i\ge 1$, then $ H^i(\cal{O}_V) \otimesl_A k$ is non-zero in cohomological degree $-d$. Let $j\in [d-2]$ be the first index such that $H^j(\cal{O}_V)\neq 0$. 
By investigating the spectral sequence
\[E_2^{i,j} = \mathrm{Tor}_{-i}^A(H^j(\cal{O}_V), k) \Rightarrow H^{i+j}(R\Gamma(\cal{O}_V) \otimesl_A k),\]
we conclude that the $E_2^{-d,j}$-term survives to the $E_{\infty}$-page, which contradicts the fact that $R\Gamma(\cal{O}_V) \otimesl_A k \simeq 0$. 
Hence $H^i(\cal{O}_V) = 0$ for $i\ge 1$, and thus $V$ is affine as claimed.
\end{proof}
\section{Mixed characteristic}
Our main result is the following.
\begin{theorem}\label{thm:cohpuregen}
Let $(A,\ideal{m}_A,k)$ be an excellent regular local ring and $V \to \spec{A}$ an \'{e}tale morphism that is cohomologically pure in codimension $1$. Then $V$ is an affine scheme.
\end{theorem}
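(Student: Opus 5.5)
The proof will follow the shape of Theorem~\ref{thm:cohpuregenequi}, with Theorem~\ref{thm:intro1} taking over the role of Theorem~\ref{thm:cohpuregenp}.

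\textbf{Reductions.} If $A$ contains a field we are done by Theorem~\ref{thm:cohpuregenequi}. So assume $A$ has mixed characteristic, hence $p \in \ideal{m}_A$. The case $d \le 2$ is immediate. Indeed, when $d=2$ the hypotheses give $H^1(\cal{O}_V)=H^2(\cal{O}_V)=0$, and $V$ is quasi-compact and separated, so Serre's criterion applies. We may therefore take $d \ge 3$. Passing to the henselization $A^h$ preserves everything we need: it is ind-\'{e}tale and faithfully flat over $A$, it is again excellent and regular with residue field $k$, and flat base change shows that $H^i(\cal{O}_{V \times_A A^h}) = H^i(\cal{O}_V)\otimes_A A^h$. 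So the support and vanishing conditions persist, and affineness descends along the fpqc cover. Hence we may assume $(A,\ideal{m}_A)$ is henselian. Lemma~\ref{lem:cohpuretop}(ii) then shows that $V \to \spec A$ misses the closed point. By flat base change along $A \to k$ (equivalently, $R\Gamma(\cal{O}_V)$ is supported away from $\ideal{m}_A$), we get $R\Gamma(\cal{O}_V)\otimesl_A k \simeq 0$.

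\textbf{Tor-independence input.} Next we apply Theorem~\ref{thm:intro1}, stated as Theorem~\ref{Theorem:torindepmixed}(iii), which we prove separately. The hypotheses hold because $V$ is \'{e}tale, quasi-compact and separated, and its higher cohomology is supported at $\ideal{m}_A$. The conclusion is that $H^0(\cal{O}_V)\otimesl_A k$ is discrete, i.e.\ concentrated in degree $0$.

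\textbf{Spectral sequence argument.} Now run the argument of Theorem~\ref{thm:cohpuregenequi} verbatim. Each $H^i(\cal{O}_V)$ with $i \ge 1$ is supported at $\ideal{m}_A$, so it has depth $0$ if nonzero. Since $A$ is regular of dimension $d$, this forces $\Tor^A_d(H^i(\cal{O}_V),k) \ne 0$, which is the socle contribution seen via the Koszul complex. Suppose some $H^i(\cal{O}_V)$ with $i \ge 1$ is nonzero, and let $j \in [d-2]$ be the smallest such index; this range is correct because $H^{d-1}=H^d=0$. In the spectral sequence
\[E_2^{a,b}=\Tor^A_{-a}(H^b(\cal{O}_V),k)\Rightarrow H^{a+b}(R\Gamma(\cal{O}_V)\otimesl_A k)\]
the term $E_2^{-d,j}$ sits in the leftmost column $a=-d$. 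Differentials $d_r\colon E_r^{a,b}\to E_r^{a+r,b-r+1}$ leave it toward rows $b<j$, and the only such nonzero row is $b=0$. Row $b=0$ is concentrated in $a=0$ by discreteness, and it could only be hit from row $j$ if $-d+r=0$ and $j-r+1=0$, i.e.\ $j=d-1$, which is excluded. Incoming differentials come from columns $a<-d$, which vanish. So $E_2^{-d,j}$ survives to $E_\infty$, contradicting $R\Gamma(\cal{O}_V)\otimesl_A k\simeq 0$. Hence $H^i(\cal{O}_V)=0$ for all $i\ge1$, and $V$ is affine by Serre's criterion, using that $V$ is quasi-compact and separated.

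\textbf{Main obstacle.} The hard step is Theorem~\ref{thm:intro1} itself, the mixed-characteristic replacement for perfection. My plan there is to base change to $A^+$ (or to $\widehat{A^+}$ $p$-completed), which is a perfectoid-type ring, Cohen--Macaulay by Bhatt, and faithfully flat over $A$ after completion. First show that $H^0(\cal{O}_{V_{A^+}})$ has Tor-independence over $A^+$ against $k$-type quotients: reduce mod $p$ and tilt to perfect rings so that Lemma~\ref{lem:perftorind} applies. The support hypothesis on the $H^i$ is used to control the derived base change of $R\Gamma(\cal{O}_V)$ against $A^+$, so that its $H^0$ agrees with $H^0(\cal{O}_V)\otimes_A A^+$ up to $\ideal{m}$-torsion terms.
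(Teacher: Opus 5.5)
Your proposal is correct and is essentially the paper's own proof. You make the same reductions (mixed characteristic, $d\ge 3$, henselian $A$), use Lemma~\ref{lem:cohpuretop}(ii) to get $R\Gamma(\mathcal{O}_V)\otimesl_A k\simeq 0$, take discreteness of $H^0(\mathcal{O}_V)\otimesl_A k$ from Theorem~\ref{Theorem:torindepmixed}(iii), and argue that the socle term $E_2^{-d,j}$ survives; your explicit differential bookkeeping, where the only possible obstruction is $j=d-1$, just fills in what the paper leaves implicit. One small fix: vanishing of $H^i(\mathcal{O}_V)$ for $i\ge 1$ gives affineness because $V$ is quasi-affine (by Zariski's main theorem, since $V$ is separated and quasi-finite over $\spec{A}$), not merely because it is quasi-compact and separated, as $\mathbb{P}^n$ shows.
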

In the equicharacteristic proof given in the previous section (Theorem~\ref{thm:cohpuregenequi}), we used two critical properties of perfect rings:
\begin{enumerate}
\item For any diagram of perfect rings $B \leftarrow A \to C$, the natural map $B \otimesl_A C \to B \otimes_A C$ is an isomorphism.
\item There exists a faithfully flat perfect cover $A \to A_{\text{perf}}$ such that $H^0(\cal{O}_{V'})=H^0(\cal{O}_V) \otimes_A A_{\text{perf}}$ is a perfect ring, where $V' = V \times_{\spec{A}} \spec{A_{\text{perf}}}$.
\end{enumerate}
Mixed characteristic analogues of the first property are usually stated in the $p$-completed setting, which would not be suitable for our purposes.
Instead, we will use the following.
\begin{proposition}\label{prop:torindep}
Let $B \leftarrow A \to C$ be a diagram of commutative rings, and assume there exists an element $\varpi=pu \in A$ admitting all $p$-power roots where $u \in A$ is a unit. 
Suppose that $C$ is perfect and both $A$ and $B$ are $p$-torsionfree rings such that $A/\varpi^{1/p^{\infty}}$ and $B/\varpi^{1/p^{\infty}}B$ are perfect rings. 
Then $B \otimesl_A C$ is a discrete ring.
\end{proposition}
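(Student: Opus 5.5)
The plan is to show $\mathrm{Tor}^A_i(B,C)=0$ for $i>0$. I will peel off the non-perfect part of $A$ in two steps, reducing to Lemma~\ref{lem:perftorind} for the perfect rings $A/\varpi^{1/p^\infty}$, $B/\varpi^{1/p^\infty}B$ and $C$. Throughout, $I\subset A$ denotes the ideal $\bigcup_n \varpi^{1/p^n}A$, and $A/\varpi^{1/p^\infty}$ means $A/I$.

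\emph{Step 1: $C$ is an $A/I$-algebra.} Since $C$ is perfect, it is an $\mathbb{F}_p$-algebra and is reduced. Hence $\varpi=pu$ maps to $0$ in $C$. Each $\varpi^{1/p^n}$ then maps to an element whose $p^n$-th power is $0$, so it maps to $0$ by reducedness. Thus $A\to C$ factors through $A/I$.

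\emph{Step 2: reduce modulo $\varpi$.} Because $A$ is $p$-torsionfree and $u$ is a unit, $\varpi$ is a nonzerodivisor on $A$, and likewise on $B$. So $A/\varpi$ is computed by $[A\xrightarrow{\varpi}A]$. Tensoring with $B$ gives $B\otimesl_A A/\varpi\simeq B/\varpi B$, which is discrete. Since $C$ is an $A/\varpi$-algebra, we get
\[
B\otimesl_A C\simeq (B\otimesl_A A/\varpi)\otimesl_{A/\varpi}C\simeq B/\varpi B\otimesl_{A/\varpi}C .
\]

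\emph{Step 3: reduce modulo $I$.} The key claim is $B/\varpi B\otimesl_{A/\varpi}A/I\simeq B/IB$, which is discrete. Write $A/I=\mathrm{colim}_n A/\varpi^{1/p^n}$; derived tensor products commute with filtered colimits, so it suffices to treat each $A/\varpi^{1/p^n}$ compatibly in $n$. Set $a=\varpi^{1/p^n}$ and $b=\varpi^{1-1/p^n}$, so $ab=\varpi$. Over $A/\varpi$ there is the 2-periodic complex
\[
\cdots\xrightarrow{a}A/\varpi\xrightarrow{b}A/\varpi\xrightarrow{a}A/\varpi\to A/a\to 0 .
\]
Its exactness amounts to $\mathrm{Ann}_{A/\varpi}(a)=bA/\varpi$ and $\mathrm{Ann}_{A/\varpi}(b)=aA/\varpi$. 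These follow from $\varpi$ being a nonzerodivisor: if $ax\in\varpi A$, write $ax=\varpi y=aby$, cancel the nonzerodivisor $a$ to get $x=by$, and argue symmetrically for $b$. Tensoring this free resolution with $B/\varpi B$ gives the same periodic complex for $B$. It is again exact in positive degrees because $\varpi$ is a nonzerodivisor on $B$. Hence $B/\varpi B\otimesl_{A/\varpi}A/a\simeq B/aB$, and passing to the colimit gives $B/IB$. Therefore
\[
B\otimesl_A C\simeq (B/\varpi B\otimesl_{A/\varpi}A/I)\otimesl_{A/I}C\simeq B/IB\otimesl_{A/I}C .
\]

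\emph{Step 4: conclude.} By hypothesis $A/I$ and $B/IB$ are perfect, and $C$ is perfect. Lemma~\ref{lem:perftorind} then shows that $B/IB\otimesl_{A/I}C$ is discrete, which proves the proposition.

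The main obstacle is Step 3. The periodic-resolution computation must be made carefully: the annihilator identities, and the compatibility of the resolutions with the transition maps $A/\varpi^{1/p^{n+1}}\to A/\varpi^{1/p^n}$. It would be cleaner to verify only that each $\mathrm{Tor}^{A/\varpi}_i(B/\varpi B,A/\varpi^{1/p^n})$ vanishes for $i>0$ and then commute Tor with the colimit. Steps 1, 2 and 4 are formal.
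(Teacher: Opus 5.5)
Your proof is correct and follows the paper's route. You factor $A \to C$ through $A/\varpi^{1/p^\infty}$, use that $\varpi$ is a nonzerodivisor on $B$ to get $B \otimesl_A A/\varpi^{1/p^\infty} \simeq B/\varpi^{1/p^\infty}B$, and conclude by Lemma~\ref{lem:perftorind}. Your intermediate reduction modulo $\varpi$ and the periodic resolution are a harmless detour: each $\varpi^{1/p^n}$ divides $\varpi$, so it is already a nonzerodivisor on $A$ and $B$, and the two-term complex $A \xrightarrow{\varpi^{1/p^n}} A$ resolves $A/\varpi^{1/p^n}$ directly over $A$, which is what the paper implicitly uses.
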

\begin{proof}
The map $A \to C$ factors over the ring $A/\varpi^{1/p^{\infty}}$.
Note that $B \otimesl_{A} A/\varpi^{1/p^{\infty}} \simeq B/\varpi^{1/p^{\infty}}B$ is a discrete ring since $B$ is $p$-torsionfree (so $\varpi$ is a nonzerodivisor in $B$). Therefore, we have
\[B \otimesl_A C \simeq (B \otimesl_A A/\varpi^{1/p^{\infty}}) \otimesl_{A/\varpi^{1/p^{\infty}}} C \simeq (B/\varpi^{1/p^{\infty}}B) \otimesl_{A/\varpi^{1/p^{\infty}}} C.\]
Since all the rings in the right-term of the above expression are perfect, we conclude that $B \otimesl_A C$ is a discrete ring as desired.
\end{proof}
As for the second property, in mixed characteristic, perfectoid rings are generally considered the correct replacement for perfect rings, 
but finding perfectoid covers is more subtle. Moreover, we
are not sure if the following analogue of the second property holds for perfectoid rings.
\begin{question}
If $V \to \spec{A}$ is an \'{e}tale morphism of schemes with $A$ perfectoid, is the $p$-completion of $H^0(\cal{O}_V)$ a perfectoid ring?
\end{question}
We introduce particular kinds of schemes where some control over the global sections of arbitrary open subsets is guaranteed.
\begin{proposition}\label{prop:perfglobalsections}
Consider the following class of commutative rings having the following property (*): $A$ is a normal domain such that every element of $A$ has a $p$-th root in $A$.
\begin{enumerate}
\item[(i)] If $A$ has property (*), then $A/(p^{1/p^{\infty}})$ is a perfect ring.
\item[(ii)] If $X$ is an integral scheme such that, for every affine open $\spec{A} \subset X$, $A$ has property (*), then $\Gamma(X, \cal{O}_X)$ is also a ring with property (*).
\item[(iii)] If $X$ is an integral scheme such that, for every affine open $\spec{A} \subset X$, $A$ has property (*), then $\Gamma(U, \cal{O}_U)$ has property (*) for every open subset $U \subset X$.
\end{enumerate}
\end{proposition}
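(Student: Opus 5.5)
The plan is to check each part directly from the definitions. The one idea used throughout is that normality lets us avoid choosing $p$-th roots compatibly: any element of the fraction field whose $p$-th power lies in a normal domain is integral over it, and so already lies in it.

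For (i), let $A$ have property (*). Pick a compatible system of $p$-power roots $p^{1/p^n}$ in $A$; any two choices of $p^{1/p^n}$ differ by a root of unity, which is a unit, so the ideal $(p^{1/p^\infty})=\bigcup_n (p^{1/p^n})$ does not depend on the choices. (If $p=0$ in $A$ this ideal is $0$, and the argument below still applies.) The ring $A/(p^{1/p^\infty})$ has characteristic $p$ because $p$ lies in the ideal. Frobenius on it is surjective because every element of $A$ has a $p$-th root. For injectivity, suppose $x^p \in (p^{1/p^\infty})$, say $x^p = p^{1/p^n}a$ with $a\in A$. In $K(A)$ set $y = x/p^{1/p^{n+1}}$ (if $p=0$ then $x^p=0$, so $x=0$ since $A$ is a domain). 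Then $y^p = a \in A$, so $y$ is integral over $A$, and $y\in A$ by normality. Hence $x = p^{1/p^{n+1}}y \in (p^{1/p^\infty})$, and Frobenius is injective.

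For (ii), cover $X$ by nonempty affine opens $\spec{A_i}$. Since $X$ is integral, $\Gamma(X,\cal{O}_X)=\bigcap_i A_i$ inside $K(X)$, so $\Gamma(X,\cal{O}_X)$ is a domain. For normality, take $z\in K(\Gamma(X,\cal{O}_X))\subset K(X)$ integral over $\Gamma(X,\cal{O}_X)$. Then $z$ is integral over each $A_i$, and $K(A_i)=K(X)$ with $A_i$ normal, so $z\in A_i$ for every $i$, giving $z\in\Gamma(X,\cal{O}_X)$. For $p$-th roots, take $f\in\Gamma(X,\cal{O}_X)$ and fix any single $p$-th root $g\in A_{i_0}\subset K(X)$ of $f$. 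For every $i$ we have $g^p=f\in A_i$, so $g$ is integral over $A_i$, hence $g\in A_i$ by normality. Thus $g\in\bigcap_i A_i=\Gamma(X,\cal{O}_X)$.

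For (iii), a nonempty open $U$ of an integral scheme is integral, and every affine open of $U$ is an affine open of $X$, so its ring has property (*). Applying (ii) to $U$ gives the claim. The only potential obstacle is gluing local $p$-th roots, since in characteristic $0$ they are unique only up to roots of unity. Choosing one root in $K(X)$ and using normality of each chart removes this difficulty completely.
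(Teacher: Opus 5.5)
Your proposal is correct and follows essentially the same route as the paper. In (i) you divide by $p^{1/p^{n+1}}$ and use normality to get injectivity of Frobenius, and in (ii) you fix a single $p$-th root in the function field and use normality of every affine chart to see that it is a global section; the only cosmetic differences are that you check normality of $\Gamma(X,\mathcal{O}_X)$ directly via $\Gamma(X,\mathcal{O}_X)=\bigcap_i A_i$ instead of citing the Stacks Project, and you work with Frobenius on $A/(p^{1/p^\infty})$ itself rather than with the maps $A/p^{1/p^{n+1}}\to A/p^{1/p^n}$.
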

\begin{proof}
(i): If $p$ is invertible then $A/(p^{1/p^{\infty}})=0$ and the result is trivial. Otherwise, it is sufficient to show that the Frobenius map $A/p^{1/p^{n+1}} \to A/p^{1/p^{n}}$ is an isomorphism 
for any $n\ge 1$. 
Surjectivity follows because $A$ has all $p$-th roots. For injectivity,
if $x \in A$ has the property that $x^p = p^{1/p^{n}}y$ for some $y \in A$, then $z=\frac{x}{p^{1/p^{n+1}}} \in K(A)$ satisfies the $A$-integral equation $z^p-y=0$.
Since $A$ is normal, we conclude that $z \in A$, so $x = p^{1/p^{n+1}}z \in p^{1/p^{n+1}}A$ as desired.\\
\indent For (ii), by \cite[0358]{sp}, we know that $\Gamma(X, \cal{O}_X)$ is a normal domain. Thus, we need to show that every element $x \in \Gamma(X, \cal{O}_X)$ has a $p$-th root in $\Gamma(X, \cal{O}_X)$. 
Let $\eta \in X$ be the generic point. Then $x|_{\eta} \in k(\eta)$ has a $p$-th root in $k(\eta)$, say $y$. Now $y$ is integral over $\Gamma(X, \cal{O}_X)$, so therefore is also integral 
over any affine open $U=\spec{A} \subset X$.
Since $A$ is normal with fraction field $k(\eta)$, we conclude that there exists an element $y_U \in A$ such that $y_U|_{\eta} = y$ and $y_U^p = x|_U$. It is clear
that for any smaller affine open $V \subset U$ that $y_V = y_U|_V$, hence, we obtain a global section $y \in \Gamma(X, \cal{O}_X)$ such that $(y^p - x)|_U = 0$ for all 
affine opens $U \subset X$, so $y^p - x=0$ in $\Gamma(X, \cal{O}_X)$ as desired.\\
\indent (iii) follows from (ii) by replacing $X$ with $U$.
\end{proof}
There is an obvious choice of schemes satisfying the condition in Proposition~\ref{prop:perfglobalsections}, namely, the spectra of absolute integral closures of local domains.
\begin{proposition}\label{prop:flatcovermixed}
Let $X$ be an integral, normal scheme whose fraction field is algebraically closed.
\begin{enumerate}
\item[(i)] For every affine open $\spec{A} \subset X$, $A$ has property (*) of Proposition~\ref{prop:perfglobalsections}. 
\item[(ii)] For every open subset $U \subset X$, the ring $A'=H^0(\cal{O}_U)$ has property (*) of Proposition~\ref{prop:perfglobalsections}. In particular,
$A'/p^{1/p^{\infty}}$ is a perfect ring.
\item[(iii)] Let $U \to X$ be any separated \'{e}tale morphism. Then $U$ is a disjoint union of open subsets of $X$.
\end{enumerate}
\end{proposition}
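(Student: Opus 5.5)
For (i), take an affine open $\spec{A} \subset X$. Since $X$ is integral and normal, $A$ is a normal domain with fraction field $K(A)=K(X)$, which is algebraically closed. Given $x \in A$, the polynomial $T^p - x$ has a root $y \in K(A)$ because $K(A)$ is algebraically closed. This $y$ is integral over $A$, so normality forces $y \in A$. Hence $A$ has property (*).

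For (ii), let $U \subset X$ be open. Then $U$ is an integral scheme, and each affine open of $U$ is an affine open of $X$, so it has property (*) by (i). Proposition~\ref{prop:perfglobalsections} (iii), or (ii) applied directly to $U$, then shows that $A'=H^0(\cal{O}_U)$ has property (*). Proposition~\ref{prop:perfglobalsections} (i) gives that $A'/p^{1/p^{\infty}}$ is perfect. When $p$ is not invertible, the elements $p^{1/p^n}$ are chosen compatibly in $A'$, which is possible since every element has a $p$-th root; when $p$ is invertible the quotient is $0$ and there is nothing to prove.

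For (iii), I would argue locally on $U$. Since $U \to X$ is étale and $X$ is normal, $U$ is normal, so $U$ is a disjoint union of its connected components, each of which is integral. We may therefore assume $U$ is connected and hence integral. Any nonempty open of an integral scheme is dense, so the generic point of $U$ maps to the generic point $\eta$ of $X$. The function field $K(U)$ is then a finite separable extension of $K(X)$, because $U \to X$ is étale and the generic fibre is étale over the field $K(X)$. Since $K(X)$ is algebraically closed, $K(U)=K(X)$, and so $U \to X$ is birational.

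The remaining step, which I expect to be the main obstacle, is to show that a separated étale birational morphism $f: U \to X$ to a normal integral scheme is an open immersion. My plan is to use Zariski's main theorem. Locally, $f$ is quasi-finite and separated, so over an affine open $W=\spec{A} \subset X$ and an affine open $U_0 \subset f^{-1}(W)$, $U_0$ is an open subscheme of the normalization of $A$ in $K(U)$ (\cite[02LR]{sp}). That normalization is $A$ itself, since $K(U)=K(A)$ and $A$ is normal. Hence $U_0 \to W$ is an open immersion, so $f$ is a local isomorphism. Injectivity of $f$ then follows from separatedness. If $u_1,u_2 \in U$ map to the same point $x$, choose open neighbourhoods $U_1 \ni u_1$ and $U_2 \ni u_2$ mapping isomorphically onto opens of $X$, and let $W = f(U_1) \cap f(U_2)$. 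The two sections $s_i: W \to U_i \subset U$ of $f$ agree at the generic point, because $K(U)=K(X)$ identifies the generic points. Since $U$ is separated over $X$ and $W$ is reduced, the equalizer of $s_1,s_2$ is a closed subscheme of $W$ containing the generic point, hence all of $W$. Thus $s_1=s_2$, and $u_1=s_1(x)=s_2(x)=u_2$. An injective local isomorphism is an open immersion. Consequently each connected component of $U$ is isomorphic to an open subset of $X$, which proves (iii).
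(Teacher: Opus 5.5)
Your proof is correct. Parts (i) and (ii) coincide with the paper's argument. For (iii), the paper gives no argument beyond citing \cite[09Z9]{sp}, part (1), whereas you reprove that lemma. Your route is: $U$ is normal, hence a disjoint union of integral pieces. Each piece has function field finite separable over the algebraically closed field $K(X)$, hence is birational to $X$. Zariski's main theorem then makes $U \to X$ a local isomorphism; here it applies because $A \subset \Gamma(U_0,\cal{O}_{U_0}) \subset K(A)$ with $A$ normal, so the integral closure of $A$ in $\Gamma(U_0,\cal{O}_{U_0})$ is $A$. Finally, separatedness gives injectivity, hence an open immersion. This is essentially the standard proof behind the reference. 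The citation buys brevity, while your version is self-contained and shows that no Noetherian hypothesis is needed. That matters, since the paper applies (iii) to $X=\spec{A^{+}}$.

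In that non-Noetherian setting, two steps deserve one more sentence each. First, the generic point of $U$ maps to $\eta$ because \'{e}tale morphisms are open: $f(U)$ is a nonempty open set, so it contains $\eta$. The fact that nonempty opens are dense does not by itself give this. Second, the connected components of the normal scheme $U$ are open and integral because $U$ has locally finitely many irreducible components. This holds because on each affine open $U_0=\spec{B}$ of $U$, the generic fibre $\spec(B\otimes_A K(A))$ is finite, and by flatness it contains the generic points of all irreducible components of $U_0$. For an arbitrary non-Noetherian normal scheme this conclusion can fail.
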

\begin{proof}
(i): Every such $A$ is normal with algebraically closed fraction field. Therefore, it suffices to observe that $K(A)$ contains all $p$-th roots.\\
(ii): This follows from (i) and Proposition~\ref{prop:perfglobalsections} (iii).\\
(iii): This is the content of (1) of \cite[09Z9]{sp}.
\end{proof}
To proceed, we will use a critical result of Bhatt in his seminal paper \cite{bhattcohen}.
\begin{theorem}\label{thm:bhattcohen}
Let $(A,\ideal{m}_A,k)$ be an excellent local Noetherian domain with $p \in \ideal{m}_A$. Let $A^{+}$ be the normalization of $A$ in an algebraic closure of its fraction field. 
Then $R\Gamma_{\ideal{m}_A}(A^{+})$ is concentred in cohomological degree $d$ where $d=\mathrm{dim}(A)$.
\end{theorem}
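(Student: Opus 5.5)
This statement is Bhatt's theorem \cite{bhattcohen}, and the paper uses it as a black box. I only sketch how I would organize a proof; the last step needs substantial machinery not developed here.

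\textbf{Reductions.} Since $A^{+}$ is a filtered colimit of module-finite normal extensions $A \subset A_i$, and local cohomology commutes with filtered colimits, we have $R\Gamma_{\ideal{m}_A}(A^{+}) \simeq \mathrm{colim}_i\, R\Gamma_{\ideal{m}_A}(A_i)$. The vanishing in degrees $<d$ therefore amounts to the following: for each $i<d$, each $A_i$ and each class $\alpha \in H^i_{\ideal{m}_A}(A_i)$, there is a further finite extension $A_i \subset A_j$ killing $\alpha$. Vanishing above degree $d$ is automatic, since $\ideal{m}_A$ is generated up to radical by $d$ elements and Čech complexes compute local cohomology. I would then reduce to the case where $A$ is complete local. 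Excellence makes completion compatible with normalization, and by approximation finite extensions of $\widehat{A}$ can be descended to henselian, hence ind-étale, neighbourhoods of $A$. Since $p \in \ideal{m}_A$, every local cohomology class is $p$-power torsion, so I may replace $A^{+}$ by its $p$-completion $\widehat{A^{+}}$, which is perfectoid.

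\textbf{Passing to a projective situation.} Using Cohen's structure theorem, I would present $A$ as a finite extension of $W(k)[[x_2,\dots,x_d]]$. Via a cone or Segre-type construction, I would then interpret local cohomology classes of $A_i$ in degrees $<d$ as coherent cohomology classes $H^{j}(X,\cal{O}_X(n))$ with $j>0$, where $X$ is a projective scheme over a complete regular base. With this translation, the theorem follows from the following statement: for $X$ proper over a $p$-adically complete base, the map $H^{j}(X,\cal{O}_X/p) \to H^{j}(X^{+},\cal{O}_{X^{+}}/p)$ vanishes for $j>0$, where $X^{+}$ denotes the limit over all finite covers. A statement mod $p$ suffices by a standard dévissage, using the $p^{1/p^{\infty}}$-structure of $A^{+}$, since $A^{+}$ is $p$-torsionfree.

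\textbf{Main obstacle.} The heart of the proof is this mixed-characteristic ``killing cohomology by finite covers'' statement, a $p$-adic analogue of the Hochster--Huneke / Huneke--Lyubeznik theorem in characteristic $p$. My approach would be to compute $R\Gamma(X^{+},\cal{O}/p)$ through prismatic cohomology of the perfectoid tower. The Hodge--Tate comparison identifies the relevant cohomology with a Frobenius-fixed object. Applying Frobenius-twisted ampleness, which is where positivity of $\cal{O}(n)$ enters, together with the absolute integral closedness that allows extraction of $p$-power roots of sections, I would show that the induced Frobenius on $H^{j}$ is simultaneously nilpotent and bijective after passing to $X^{+}$, hence that $H^{j}=0$. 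This requires the full prismatic/perfectoid apparatus and the Riemann--Hilbert-type input of \cite{bhattcohen}. Within this paper I would simply cite that work rather than reproduce the argument.
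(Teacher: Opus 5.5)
Your proposal agrees with the paper. The paper gives no proof of this statement; it imports it as a black box from Bhatt's work \cite{bhattcohen}, and your argument likewise ultimately rests on that citation. Your opening reductions (writing $R\Gamma_{\ideal{m}_A}(A^{+})$ as a filtered colimit over finite extensions, and the automatic vanishing above degree $d$) are correct. The rest of your outline is only heuristic and not load-bearing; it also treats only mixed characteristic, whereas the statement includes characteristic $p$, which is the earlier Hochster--Huneke theorem. Since the citation covers both cases, nothing is missing.
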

\begin{theorem}\label{Theorem:torindepmixed}
Let $(A,\ideal{m}_A,k)$ be an excellent regular local Noetherian ring of dimension $d$ with $p \in \ideal{m}_A$, $A^{+}$ the normalization of $A$ in the algebraic
closure of $K(A)$, and $i: A \to A^{+}$ the induced integral map.
\begin{enumerate}
\item[(i)] Let $M$ be a module over $A$ that is supported on $\{\ideal{m}_A\}$. Then $M \otimesl_A A^{+}$ is discrete.
\item[(ii)]Let $V \to \spec{A}$ be a flat, quasi-compact morphism of schemes such that $V$ is cohomologically pure in codimension $1$ over $\spec{A}$.
 Then $H^0(\cal{O}_V) \otimesl_A A^{+}$ is a discrete ring equal to $H^0(\cal{O}_{V'})$ where $V'= V \times_{\spec{A}} \spec{A^{+}}$.
 \item[(iii)] In the situation of (ii), if $V \to \spec{A}$ is further assumed to be \'{e}tale, then $H^0(\cal{O}_V) \otimesl_A k$ is a discrete ring.
\end{enumerate}
\end{theorem}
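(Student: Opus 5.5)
For (i), I will show $\mathrm{Tor}_j^A(M,A^+)=0$ for $j\ge 1$. Write $M=\operatorname{colim} M_\lambda$ as a filtered colimit of finitely generated submodules. Tor commutes with filtered colimits, so we may assume $M$ is finitely generated. Each such $M_\lambda$ is then of finite length, hence has a finite filtration with graded pieces $k$. By the long exact sequences it then suffices to treat $M=k$.

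Since $A$ is regular, $k$ is resolved by the Koszul complex on a regular system of parameters $x_1,\dots,x_d$. Thus $k\otimesl_A A^+\simeq K(x_1,\dots,x_d;A^+)$. Its cohomology is controlled by $R\Gamma_{\ideal{m}_A}(A^+)$: Koszul homology vanishes in positive homological degrees exactly when $x_1,\dots,x_d$ is a regular sequence on $A^+$. That is equivalent to $H^i_{\ideal{m}_A}(A^+)=0$ for $i<d$ (together with $A^+/\ideal{m}_AA^+\neq 0$). This is precisely Theorem~\ref{thm:bhattcohen}, applied to the domain $A$. Alternatively, one can use the self-duality $K^\bullet\simeq K_\bullet[d]$ together with $R\Gamma_{\ideal{m}}(-)\simeq \operatorname{colim}_n K(x^n;-)^\vee$. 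So (i) is a direct consequence of Bhatt's theorem, and this is the heart of the matter.

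For (ii), set $A'=H^0(\cal{O}_V)$. Since $V$ is quasi-compact and separated over $A$ and $A\to A^+$ is a base change, flat base change fails in general, but derived base change holds: $R\Gamma(\cal{O}_V)\otimesl_A A^+\simeq R\Gamma(V,\cal{O}_V\otimesl_A A^+)$. Because $V\to\spec A$ is flat, $\cal{O}_V\otimesl_AA^+=\cal{O}_{V'}$, so this equals $R\Gamma(\cal{O}_{V'})$. Now filter $R\Gamma(\cal{O}_V)$ by the triangle $A'\to R\Gamma(\cal{O}_V)\to \tau^{\ge1}R\Gamma(\cal{O}_V)$. By (i), each $H^i(\cal{O}_V)\otimesl_A A^+$ for $i\ge1$ is discrete, so $\tau^{\ge1}R\Gamma(\cal{O}_V)\otimesl_AA^+$ lives in degrees $\ge 1$. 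Applying $-\otimesl_AA^+$ to the triangle gives a long exact sequence. Since $R\Gamma(\cal{O}_{V'})$ lives in degrees $\ge0$, the complex $A'\otimesl_AA^+$ (which sits in degrees $\le 0$) has vanishing $H^{-j}$ for $j\ge1$. The degree-zero part gives $A'\otimes_AA^+\hookrightarrow H^0(\cal{O}_{V'})$, and the cokernel injects into $H^0$ of the $\tau^{\ge1}$ term, which is $0$. Hence $A'\otimesl_AA^+=H^0(\cal{O}_{V'})$. Only the support hypothesis is really used here. The one point to check carefully is the degree bookkeeping in the long exact sequence.

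For (iii), assume $V$ is étale and, after splitting into components as in Lemma~\ref{lem:cohpuretop}, that $V$ is nonempty. Then $V'\to\spec A^+$ is separated étale, so by Proposition~\ref{prop:flatcovermixed} (iii) it is a disjoint union of open subsets of $\spec A^+$; being quasi-compact, the union is finite. By Proposition~\ref{prop:flatcovermixed} (ii), $B:=H^0(\cal{O}_{V'})$ is a finite product of rings with property (*), and these are $p$-torsionfree with $B/p^{1/p^\infty}B$ perfect. Proposition~\ref{prop:perfglobalsections} (i) gives the same for $A^+$, with $\varpi=p$. Taking $C=\bar k$, the residue field of $A^+$, which is perfect, Proposition~\ref{prop:torindep} shows $B\otimesl_{A^+}\bar k$ is discrete. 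Combining with (ii),
\[A'\otimesl_Ak\otimesl_k\bar k\simeq A'\otimesl_AA^+\otimesl_{A^+}\bar k\simeq B\otimesl_{A^+}\bar k\]
is discrete. Since $k\to\bar k$ is faithfully flat, $A'\otimesl_Ak$ is discrete.
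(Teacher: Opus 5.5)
Your proposal is correct and follows the paper's proof: part (ii) is the paper's spectral-sequence argument recast as a truncation triangle, and part (iii) is the same argument (applying Proposition~\ref{prop:torindep} to the product $H^0(\cal{O}_{V'})$ rather than to each factor is harmless). The one weak point is the justification of the Koszul step in (i). For non-finitely generated modules, vanishing of $H^i_{\ideal{m}}$ for $i<d$ does not make a regular system of parameters a regular sequence; for example, take $R\oplus R_y\oplus (R/x)_y$ over $R=F[[x,y]]$, on which $x,y$ is not regular. Instead, justify acyclicity by $K(x;A^+)\simeq k\otimesl_A A^+\simeq k\otimesl_A R\Gamma_{\ideal{m}_A}(A^+)$ together with the amplitude bound coming from global dimension $d$. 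This is exactly the paper's argument, and it works for every $\ideal{m}_A$-supported $M$ directly, so your d\'evissage to $k$ is unnecessary.
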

\begin{proof}
(i): Since $M$ is supported on $\{\ideal{m}_A\}$, we have
\[A^{+} \otimesl_A M \simeq R\Gamma_{\ideal{m}_A}(A^{+}) \otimesl_A M.\]
Since $A$ is regular, $A$ has global dimension $d$, and since $R\Gamma_{\ideal{m}_A}(A^{+})$ is concentrated in cohomological degree $d$ by Theorem~\ref{thm:bhattcohen}, 
the right-hand side is concentrated in cohomological degrees $\ge 0$. But the left-hand side is concentrated in cohomological degrees $\le 0$,
so we conclude that $A^{+} \otimesl_A M$ is discrete.\\
(ii): By flat base-change, we know that
\[R\Gamma(\cal{O}_{V'}) \simeq R\Gamma(\cal{O}_V) \otimesl_A A^{+}.\]
We have a spectral sequence 
\[E_2^{i,j} = \mathrm{Tor}_{-i}^A(H^j(\cal{O}_V), A^{+}) \Rightarrow H^{i+j}(R\Gamma(\cal{O}_V) \otimesl_A A^{+}) = H^{i+j}(\cal{O}_{V'}).\]
Since $V \to \spec{A}$ is cohomologically pure in codimension $1$, each $H^j(\cal{O}_V)$ is supported on $\{\ideal{m}_A\}$ for $j \ge 1$, so by (i), $E_2^{-i,j} = 0$ for all $i \ge 1$ and $j \ge 1$.
Therefore, the terms $E_2^{-i,0} = \Tor_{i}^A(H^0(\cal{O}_V), A^{+})$ survive to the $E_{\infty}$-page, contributing to negative cohomological degrees of the complex $R\Gamma(\cal{O}_{V'})$.
We therefore conclude that $H^0(\cal{O}_V) \otimesl_A A^{+}$ is discrete and equal to $H^0(\cal{O}_{V'})$.\\
(iii): By (ii), $H^0(\cal{O}_{V}) \otimesl_A A^{+}$ is a discrete ring equal to $H^0(\cal{O}_{V'})$. Moreover,
by Proposition~\ref{prop:flatcovermixed}, $V'=\prod_{i \in [n]} V_i, n \in \bb{N}$ where each $V_i$ is an open subset of $\spec{A^{+}}$,
so $H^0(\cal{O}_{V_i})$ is a normal domain such that $H^0(\cal{O}_{V_i})/p^{1/p^{\infty}}$ is a perfect ring for each $i \in [n]$.\\
\indent Let $k_{A^{+}}$ be the residue field of $A^{+}$. Note that the induced map 
\[k=A/(\ideal{m}_{A^{+}} \cap A) \to A^{+}/\ideal{m}_{A^{+}}=k_{A^{+}}\]
identifies $k_{A^{+}}$ as an algebraic closure of $k$, so $k \to k_{A^{+}}$ is faithfully flat and $k_{A^{+}}$ is perfect. 
Therefore, by faithfully flat descent, it suffices to show that $H^0(\cal{O}_V) \otimesl_A k_{A^{+}}$ is a discrete ring. We have
\[H^0(\cal{O}_V) \otimesl_A k_{A^{+}} \simeq H^0(\cal{O}_V) \otimesl_A A^{+} \otimesl_{A^{+}} k_{A^{+}} \simeq H^0(\cal{O}_{V'}) \otimesl_{A^{+}} k_{A^{+}}.\]
Since $V'=\prod_{i \in [n]} V_i$, we have
\[H^0(\cal{O}_{V'}) \otimesl_{A^{+}} k_{A^{+}} \simeq \prod_{i \in [n]} H^0(\cal{O}_{V_i}) \otimesl_{A^{+}} k_{A^{+}}.\]
By Proposition~\ref{prop:torindep}, $H^0(\cal{O}_{V_i}) \otimesl_{A^{+}} k_{A^{+}}$ 
is a discrete ring for each $i \in [n]$. Hence $H^0(\cal{O}_V) \otimesl_A k$ is a discrete ring as claimed.\\
\end{proof}
\begin{remark}
As suggested to the author by Bhargav Bhatt, Theorem~\ref{Theorem:torindepmixed} (iii) can be proved more directly if 
one additionally knows that $H^1(\cal{O}_V)=0$. Indeed, since $A$ is excellent, we may assume $A$ is a complete local ring. 
Then $A$ admits a faithfully flat local ring map $A \to A'$ with $A'$ perfectoid (see \cite[Theorem 4.7 or Example 3.8]{padickunz}), 
and $A'$ comes equipped with an element $\varpi=pu \in A'$ admitting all $p$-power roots, where $u \in A'$ is a unit, such that $\bar{A'}=A'/\varpi^{1/p^{\infty}}$ is a perfect ring. Let $V' = V \times_{\spec{A}} \spec{A'}$.
By considering the relevant Bockstein sequences and commuting filtered colimits with cohomology, we see that $H^0(\cal{O}_{V'})/\varpi^{1/p^{\infty}} = H^0(\cal{O}_{\bar{V'}})$, where $\bar{V'} = V' \times_{\spec{A'}} \spec{\bar{A'}}$. Since $\bar{V'} \to \bar{A'}$ is \'{e}tale, $H^0(\bar{V'})$ is a perfect ring. 
Hence, by Proposition~\ref{prop:torindep}, $H^0(\cal{O}_{V'}) \otimesl_{A'} k_{A'}$ is a discrete ring, where $k_{A'}$ is the residue field of $A'$, and since both $A \to A'$ and $k \to k_{A'}$ are faithfully flat, we conclude that $H^0(\cal{O}_V) \otimesl_A k$ is a discrete ring as well.\\
\indent In the context of Theorem~\ref{thm:cohpuregen}, the vanishing of $H^1(\cal{O}_V)$ is equivalent to the affineness of $V$ in the following sense. Assume $\mathrm{dim}(A) \ge 3$. 
For any regular element $f \in A$ one can show that $V_{f=0} \to \spec{A/f}$ 
is still cohomologically pure in codimension $1$, so since the case $\mathrm{dim}(A)=2$ is immediate, by induction on the dimension of $A$, 
we could then conclude that $V_{f=0}$ is affine. 
Hence, if $H^1(\cal{O}_V)=0$, then by examining the corresponding Bockstein sequence, we may conclude that $V$ is affine as well.
\end{remark}
We are now ready to prove Theorem~\ref{thm:cohpuregen}.
\begin{proof}[Proof of Theorem~\ref{thm:cohpuregen}]
We've already resolved the equicharacteristic case in Theorem~\ref{thm:cohpuregenequi}, so we may assume 
that $p \in \ideal{m}_A$. The dimension $2$ case is clear, so we may assume that $d=\mathrm{dim}(A) \ge 3$. 
We may also assume the pair $(A, \ideal{m}_A)$ is henselian since the henselianization of $A$ is ind-\'{e}tale over $A$.\\
\indent By Lemma~\ref{lem:cohpuretop} (ii), 
$V$ misses the closed point of $\spec{A}$, so $R\Gamma(\cal{O}_V) \otimesl_A k \simeq 0$, and by Theorem~\ref{Theorem:torindepmixed} (iii), $H^0(\cal{O}_V) \otimesl_A k$ is a discrete ring. 
Since $V$ is cohomologically pure in codimension $1$, each $H^i(\cal{O}_V)$ is supported on $\{\ideal{m}_A\}$ for $i \ge 1$, 
so if $H^i(\cal{O}_V) \neq 0$ for some $i \ge 1$, then $H^i(\cal{O}_V) \otimesl_A k$ is non-zero in cohomological degree $-d$.
Let $j\in [d-2]$ be the smallest number such that $H^j(\cal{O}_V) \neq 0$. By investigating the spectral sequence
\[E_2^{i,j} = \mathrm{Tor}_{-i}^A(H^{j}(\cal{O}_V), k) \Rightarrow H^{i+j}(R\Gamma(\cal{O}_V) \otimesl_A k),\]
we conclude that the $E_2^{-d,j}$-term survives in the $E_{\infty}$-page, which contradicts the fact that $R\Gamma(\cal{O}_V) \otimesl_A k \simeq 0$.
\end{proof}
\section{Affineness of the maximal \'{e}tale locus}
We establish Theorem~\ref{thm:intro3}.
\begin{theorem}\label{thm:affine-mixed}
Let $X \to Y$ be a morphism of finite-type between locally Noetherian schemes, where $Y$ is excellent and regular and $X$ is normal. 
Let $V \subset X$ be the maximal open subset such that $f|_V: V \to Y$ is \'{e}tale. The inclusion $V \to X$ is an affine morphism.
\end{theorem}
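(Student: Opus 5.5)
We follow the standard reduction in \cite[0ECD]{sp} and replace its equicharacteristic input by Theorem~\ref{thm:cohpuregen}. Affineness of $V \to X$ is local on $X$, so we may assume $X$ is affine, Noetherian and normal. By \cite[0ECD]{sp}-style arguments, a quasi-compact open immersion $j: V \to X$ into an affine Noetherian scheme is affine once it is affine after base change to $\spec{\cal{O}_{X,x}}$ for every $x \in X$. More precisely, one uses the criterion that $V$ is affine iff $H^i(V,\cal{O}_V)=0$ for $i \ge 1$. These cohomology groups are finitely generated-type objects compatible with flat base change to local rings, and their vanishing can be checked after localizing at each point of $X$. Thus we reduce to the case $X = \spec{B}$ with $B$ a normal local domain, $x$ the closed point, and $V$ the étale locus. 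We may moreover induct on $\dim \cal{O}_{X,x}$, so that $V \times_X \spec{\cal{O}_{X,x'}}$ is affine for every nonclosed point $x'$. Consequently $H^i(\cal{O}_V)$ for $i \ge 1$ is supported on the closed point $\{x\}$.

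Next we pass to the base $Y$. Let $y=f(x)$ and $A=\cal{O}_{Y,y}$, an excellent regular local ring. If $x \in V$ there is nothing to prove. If $\dim B > \dim A$ or $\dim B < \dim A$, we argue as in \cite[0ECD]{sp}: in these cases the étale locus has a complement of codimension $\le 1$ near $x$, and affineness follows from normality together with Lemma~\ref{lem:cohpuretop}-type local cohomology arguments. Otherwise $\dim B=\dim A=d$. Replacing $B$ by its strict henselization and $A$ by its henselization is harmless, since these are ind-étale and flat and commute with cohomology. In this setting $V$ is quasi-finite and étale over $\spec{A}$, hence $V \to \spec{A}$ is étale, quasi-compact and separated. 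Its higher cohomology $H^i(\cal{O}_V)$, viewed as an $A$-module, is supported on $\ideal{m}_A$, because $B$ is finite over $A$ locally at $x$ after henselization.

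It then remains to check that $V \to \spec{A}$ is cohomologically pure in codimension $1$, and in particular that $H^{d-1}(\cal{O}_V)=H^d(\cal{O}_V)=0$. Once this holds, Theorem~\ref{thm:cohpuregen} gives that $V$ is affine. We would verify the vanishing via $H^{i}(\cal{O}_V)\simeq H^{i+1}_{Z}(B)$ for $i\ge1$, where $Z=\spec{B}-V$. By Grothendieck vanishing, $H^{d}_Z(B)$ vanishes unless $\dim Z = d$, which is impossible, and $H^{d-1}(\cal{O}_V)$ would vanish by Hartshorne–Lichtenbaum. Hence $H^{d}(\cal{O}_V)=H^{d+1}_Z(B)=0$ and $H^{d-1}(\cal{O}_V)=H^d_Z(B)=0$ since $\dim Z<d$ and $B$ is complete local after completion. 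Dominance holds because $V$ is nonempty (it contains the generic point, by the earlier case analysis) and étale.

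The main obstacle is this reduction step: ensuring that after the localizations and henselizations the hypotheses of Definition~\ref{def:cohpure} genuinely hold, especially the support condition coming from the induction and the top-degree vanishing. These are exactly the points handled in \cite[0ECD]{sp}, and we would follow that argument verbatim, substituting Theorem~\ref{thm:cohpuregen} for the equicharacteristic input.
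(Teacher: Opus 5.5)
Your overall plan is the paper's: reduce to a local situation over $A=\cal{O}_{Y,y}$, check cohomological purity in codimension $1$, then apply Theorem~\ref{thm:cohpuregen}. But the reduction you carry out has a genuine gap.

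You localize on $X$ and apply Theorem~\ref{thm:cohpuregen} to $V_x=V\times_X\spec{B}$, $B=\cal{O}_{X,x}$, viewed over $\spec{A}$. That theorem needs $V_x\to\spec{A}$ to be \'{e}tale, in particular locally of finite type. Your justification, that $B$ becomes finite over $A$ after henselization, comes from Zariski's main theorem and is available only when $f$ is quasi-finite at $x$. Nothing forces quasi-finiteness. If $X=\mathrm{Bl}_yY$ with $\dim\cal{O}_{Y,y}\ge 2$, no point of the exceptional divisor is quasi-finite over $y$. Its closed points have $\dim B=\dim A$, so your equal-dimension case already contains such points. Its non-closed points make up your case $\dim B<\dim A$.

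For that case you only assert that the complement of $V_x$ has codimension $\le 1$ and that normality then gives affineness. The first claim is not argued. The second is false in general: in a normal non-factorial local ring the complement of a prime divisor need not be affine. For example, take the plane $x=z=0$ in the cone $xy=zw$, localized at the vertex. Lemma~\ref{lem:cohpuretop}(i) runs in the opposite direction and uses a regular ambient ring, so it does not help here. Your case $\dim B>\dim A$ is harmless, since there $V_x=\emptyset$ by the dimension formula.

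The paper avoids this by never localizing on $X$ before invoking Theorem~\ref{thm:cohpuregen}. It inducts on $d=\dim\cal{O}_{Y,y}$ instead of on $\dim\cal{O}_{X,x}$. For $x\in f^{-1}(y)\setminus V$ it shrinks $X$ to an affine open $U\ni x$ missing the finitely many points of $V\cap f^{-1}(y)$. Then, whatever the nature of $x$:
\begin{enumerate}
\item $V\cap U\to\spec{\cal{O}_{Y,y}}$ is \'{e}tale, separated and quasi-compact;
\item by induction it is affine over the punctured spectrum, so its higher cohomology is $\ideal{m}_y$-power torsion;
\item $e(V\cap U)\le d-2$ follows from \cite[0DXB]{sp} by bounding $e(V_{x'})\le d-1$ at every $x'\in U$.
\end{enumerate}
The local rings of $X$ enter only through that last bound.

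Separately, your Hartshorne--Lichtenbaum step is misstated. The vanishing of $H^{d}_Z(B)$ requires $\dim Z\ge 1$ (on the completion), not $\dim Z<d$; it fails for $Z=\{x\}$. Getting $\dim Z\ge 1$ when $\dim B=d\ge 2$ and $x\notin V$ is exactly where purity of the ramification locus (Theorem~\ref{thm:nagata-purity}) must be used.
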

\begin{proof}
The proof follows the same strategy as \cite[0ECD]{sp}, and we include the details for completeness.\\
\indent We first reduce to the situation that $Y$ is has finite dimension.
For any $x \in X$, we need to find an affine open neighbourhood $U$ of $x$ such that $U \cap V$ is affine. 
It therefore necessary and sufficient to show that $V_x = V \times_X \spec{\cal{O}_{X,x}}$ is affine for every $x \in X$ \cite[01Z6]{sp}. 
Hence, it is necessary and sufficient to show the result for the base-change of the morphism
$X \times_Y \spec{\cal{O}_{Y,y}} \to \spec{\cal{O}_{Y,y}}$ for each $y \in Y$, so we may assume $Y$ is local of finite Krull-dimension. 
By induction on the dimension $d$ of $Y$, we may also assume that $V \cap (X \setminus f^{-1}(y)) \to X \setminus f^{-1}(y)$ is affine.\\
\indent For any $x \in f^{-1}(y)$, if $x \in V$ then $V_x$ is clearly affine.
Since $V \to Y$ is quasi-finite, $V \cap f^{-1}(y)$ is a finite disjoint union of closed points, so if $x \in f^{-1}(y)$ is not in $V$, 
then $x$ has an open affine neighbourhood $U$ not meeting $f^{-1}(y)$. By replacing $X$ by $U$ we've reduced to the case that $X$ is affine, $V \to Y$ misses the closed point $y \in Y$, and 
$V \to \spec{\cal{O}_{Y,y}}-\{y\}$ is an affine morphism. If $d =0$ or $1$, then $V$ is affine. Hence, we assume $d \ge 2$.\\
\indent Let $e(V)$ be the largest $i$ such that $H^i(\cal{O}_V) \neq 0$. As $X$ is affine, $X \setminus V$ is defined be an ideal $I \subset \cal{O}_X$, and by \cite[0DXB]{sp}, we have
\[e(V)= \max_{x \in X} e(V_x) - 1, \; \; e(V_x) = \max \{i: H^i_{I\cal{O}_{X,x}}(\cal{O}_{X,x}) \neq 0\}.\]
If $x$ is not a specialization of any point in $V$, then $V_x$ is empty and thus $e(V_x)=0$.
If $x$ is a specialization of some point in $V$, then $\mathrm{dim}(\cal{O}_{X,x}) \le d$ by the dimension formula \cite[02JU]{sp}. $V_x$ is contained in the punctured 
spectrum of $\spec{\cal{O}_{X,x}}$, so by the purity of the ramification locus $X \setminus V$, if
$\mathrm{dim}(\cal{O}_{X,x}) \ge 2$, then 
the dimension of the complement of $V_x$ in $\spec{\cal{O}_{X,x}}$ is at least $1$, and so by Hartshorne-Lichtenbaum vanishing (\cite[0EB7]{sp}), 
$e(V_x) \le \mathrm{dim}(\cal{O}_{X,x})-1 \le d-1$. If $\mathrm{dim}(\cal{O}_{X,x}) \le 1$, then $e(V_x) \le 1$, and therefore, $e(V) \le \max(0,d-2)=d-2$.\\
\indent As $V \to \spec{\cal{O}_{Y,y}} - \{y\}$ is affine and quasi-compact, for $i \ge 1$, $H^i(\cal{O}_V)$ is supported on $\{y\}$. Moreover, $V \to \spec{\cal{O}_{Y,y}}$ 
is separated, and since we know $e(V) \le d-2$, we conclude that $V \to \spec{\cal{O}_{Y,y}}$ is an \'{e}tale morphism that is cohomologically pure in codimension $1$.
By Theorem~\ref{thm:cohpuregen}, $V$ is affine as desired.
\end{proof}
\bibliographystyle{amsalpha}
\bibliography{references}
\end{document}